\newcommand{\R}{\ensuremath{\mathbb{R}}}
\newcommand{\no}{\nonumber}
\newcommand{\Del}[1]{}
\numberwithin{equation}{section}
\newtheorem{thm}{Theorem}[section]
\newtheorem{lem}[thm]{Lemma}
\newtheorem{prop}[thm]{Proposition}
\theoremstyle{remark}
\newtheorem{rem}{Remark}
\theoremstyle{remark}
\theoremstyle{definition}
\begin{document}
\subjclass[2010]{35Q31, 76B03}
\keywords{Euler equations, commutator estimates,  continuous dependence}

\title[Euler equations in Triebel-Lizorkin spaces]{Remarks on the well-posedness of the Euler equations in the Triebel-Lizorkin spaces}
\author[Z. Guo]{Zihua Guo}
\address{School of Mathematical Sciences, Monash University, Clayton VIC 3800, Australia}
\email{zihua.guo@monash.edu}

\author[K. Li]{Kuijie Li\,$^*$}
\address{School of Mathematical Sciences, Monash University, Clayton VIC 3800, Australia}
\email{likjie@163.com}
\thanks{$^*$~Corresponding author}

\begin{abstract}
We prove the continuous dependence of the solution maps for the Euler equations in the (critical) Triebel-Lizorkin spaces, which was not shown in the previous works \cite{Ch02, Ch03, ChMiZh10}.    The proof relies on the classical Bona-Smith method as \cite{GuLiYi18}, where similar result was obtained in critical Besov spaces $B^1_{\infty,1}$.
\end{abstract}

\maketitle


\section{Introduction}  \label{intro}
This article addresses the ideal incompressible Euler equations in $\R^d$, $d\geq 2$:
\begin{align} 
\partial_t u + u\cdot \nabla u + \nabla p  = 0, \ \ \nabla\cdot u = 0, \ \  u(0,x)= u_0(x),
\end{align}
where $u :\R^+ \times \R^d \to \R^d$ represents the velocity vector, $p$ is scalar pressure, $u_0$ is the initial condition verifying $\nabla\cdot u_0 = 0$.

There are extensive literatures on the mathematical analysis of the Euler equations. The Cauchy problem in very general functional setting has been well studied. Kato~\cite{Ka72} constructed a unique local in time regular solution to the 3D Euler equation with initial data in $H^{m}(\R^3),\, m\geq 3$. Similar result was obtained for initial data belonging to $H^s_{p}(\R^d)$ with $s>1+ d/p$, see~\cite{KaPo88}.  Later, Vishik~\cite{Vi98,Vi99} proved the global existence and uniqueness for 2D Euler equations in the borderline Besov spaces $B^{1+2/p}_{p,1}$ with $1<p<\infty$. Local existence and uniqueness was then extended to critical Besov space $B^{1}_{\infty,1}(\R^d),\,d\geq 2$ by Pak and Park~\cite{PaPa04}, see also~\cite{Ch04} for a systematic treatment in Besov spaces.  
Recently, in~\cite{BoLi15,BoLi152}, Bourgain and Li proved a strongly ill-posedness result for the 2D or 3D Euler equations associated with initial data in Besov space $B^{d/p+1}_{p,q}$ for $1 \leq p < \infty, 1<q\leq \infty$ or Sobolev space $W^{d/p+1,p}$ with $1\leq p<\infty$. For Euler equations, Himonas and Misio{\l}ek~\cite{HiMi10} proved the non-uniform dependence of the solution maps in $H^s(\R^d)$ with $s>0$. So one can only expect continuous dependence. Indeed, the continuous dependence in the Besov space, in particular $B^{1}_{\infty,1}(\R^d)$, was shown recently in \cite{GuLiYi18} using Bona-Smith method (\cite{BoSm75}). 

The existence and uniqueness of the Euler equations in general Triebel-Lizorkin spaces was studied by Chae, first in the subcritical space \cite{Ch02}, and then in the critical space (\cite{Ch03}) $F^{d+1}_{1,q}(\R^d)$ for $1\leq q\leq \infty$. { It is worth noting that a gap in \cite{Ch02} (on the trajectory mapping) was filled by \cite{ChMiZh10}.  It seems to us that the proof of a crucial proposition (Proposition 2.1) and commutator estimate (3.9)  in \cite{Ch03} also have gaps.} The main problem is that the critical space is now $L^1$-based  for which usual technique may fail.  For example, the (vector-valued) Hardy-Littlewood maximal operator used in \cite{Ch02, ChMiZh10} is not $L^1$ bounded.  
On the other hand, in \cite{Takada},  some counter-examples of commutator estimates  in the Besov and the Triebel-Lizorkin spaces were constructed.  In particular, 
\begin{align} \label{countertri}
\|2^{js} [u,\Delta_j]\cdot v \|_{L_x^p l_j^q(\mathbb{Z})} \leq C \|u\|_{F^s_{p,q}} \|v\|_{F^s_{p,q}}, \ \ \  {\rm div}\, u = 0,
\end{align}
fails for $1 \leq p < \infty,\,  1\leq q \leq \infty$,\, $s< 1+d/p$.  However, we shall prove~\eqref{countertri}  holds with $p =1,\,  1 \leq q \leq \infty,\, s= d+1$, see~Proposition~\ref{esticomm2}.


The purpose of this paper is twofold.  First, we fill the gap in \cite{Ch03} and prove relevant estimates in the endpoint Triebel-Lizorkin spaces $F^s_{p,q}$ with $p=1$. 
To do this, we used some new techniques regarding maximal function estimates from \cite{Tr83}.  Second, we show the continuous dependence in the (critical) Triebel-Lizorkin space using Bona-Smith method as in \cite{GuLiYi18}.  This together with the previous results \cite{Ch02, Ch03, ChMiZh10} implies the well-posedness of the Euler equations in these spaces in the sense of Hadamard.
The main result of this paper is

\begin{thm} \label{mainresult}
Assume  that  $d \geq 2$, $(s, p, q)$  satisfies 
\begin{align}
s> \frac{d}{p} + 1,\ (p,q)\in (1,\infty) \times (1,\infty)\ \quad  \textrm{or} \ \quad s\geq d+1,\  p=1,\  q\in [1,\infty).
\end{align}
Then for arbitrary $R>0$, $u_0 \in D(R):=\{\phi \in F^s_{p,q} : \|\phi\|_{F^s_{p,q}} \leq R,\ {\rm{div}}\,\phi = 0\}$,  there exist some $T=T(R,s,p,q,d)>0$ and a unique solution $u:=S_{T}(u_0)\in C([0,T]; F^s_{p,q})$ to the Euler equations. Moreover, it satisfies
\begin{itemize}
   \item[(1)] (Boundedness): there exists some $C=C(s,p,q, d)$, such that 
    \begin{align}
     \|S_{T}(u_0)\|_{L^{\infty}_{T}F^s_{p,q}} \leq C \|u_0\|_{F^s_{p,q}}.
    \end{align}
   \item[(2)] (Continuous dependence): the solution map $u_0 \to S_{T}(u_0)$ is continuous from $D(R)$ to $C([0,T];F^s_{p,q})$. Precisely, for any $\epsilon>0$, there exists $\eta=\eta(u_0, R, s, p, q, d)$ such that for any $\psi \in D(R)$ with $\|\psi - u_0\|_{F^s_{p,q}}<\eta $, then 
   \begin{align}
    \|S_T(u_0) - S_T(\psi)\|_{L_T^{\infty}F^s_{p,q}} < \epsilon.
   \end{align}
\end{itemize} 
\end{thm}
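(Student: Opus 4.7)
The plan is to combine three ingredients in order: (i) an a priori estimate at the target regularity, yielding local existence and uniqueness; (ii) the boundedness bound~(1) as a direct byproduct; (iii) a Bona--Smith smoothing argument upgrading existence to continuous dependence. The first two items follow Chae's scheme~\cite{Ch02,Ch03}, now carried out rigorously at the critical endpoint $p=1$ by replacing the failed $L^1$ vector-valued maximal function bound with the novel commutator estimate Proposition~\ref{esticomm2}. The third item follows the strategy of~\cite{GuLiYi18}, adapted to the Triebel--Lizorkin scale.

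First I would establish the a priori estimate. Applying $\Delta_j$ to the Euler equation and using $\na\cdot u=0$ yields a transport equation
\[
(\p_t + u\cdot\na)\Delta_j u = -[\Delta_j, u\cdot\na]u - \Delta_j \na p,
\]
so taking the $F^s_{p,q}$-norm and invoking the standard transport estimate one arrives at $\frac{d}{dt}\|u\|_{F^s_{p,q}} \lec \|\na u\|_{L^\infty}\|u\|_{F^s_{p,q}}$, provided the commutator sum $\sum_j 2^{js}\|[\Delta_j,u\cdot\na]u\|_{L^p\ell^q}$ is dominated by the right-hand side; this is exactly where Proposition~\ref{esticomm2} is invoked when $p=1$, while for $p>1$ the classical Bony paraproduct decomposition suffices. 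The pressure is treated via $\na p = -\sum_{i,j}\na(-\De)^{-1}\p_i\p_j(u^iu^j)$; since Riesz operators are unbounded on $F^s_{1,q}$ one cannot freely apply them, so one estimates $\Delta_j \na p$ block-by-block, exploiting that on each annulus the symbol is a bounded multiplier of order one and absorbing the derivative loss into the product structure. Gronwall then delivers the lifespan $T=T(R)$ and statement~(1). Local existence is built by Friedrichs mollification of the data and passage to the limit, while uniqueness is obtained by estimating the difference of two solutions in the weaker norm $B^0_{\infty,1}$, using the embedding $F^s_{p,q}\hookrightarrow\mathrm{Lip}$ to close a Gronwall loop.

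For continuous dependence I would deploy the Bona--Smith method. Given $u_0\in D(R)$ set $u_0^n:=S_n u_0$ (low-frequency cutoff, where $S_n=\sum_{j\le n}\Delta_j$) and $u^n:=S_T(u_0^n)$. Two estimates drive the argument: a \emph{high-norm} bound $\|u^n\|_{L^\infty_T F^{s+\s}_{p,q}}\lec 2^{n\s}\|u_0\|_{F^s_{p,q}}$ for $\s\ge 0$, proved by rerunning the first step at higher regularity on the smooth data; and a \emph{low-norm} difference bound $\|u^{n+1}-u^n\|_{L^\infty_T F^{s-\s}_{p,q}} \lec 2^{-n\s}$, obtained from the difference equation using the input gap $\|u_0^{n+1}-u_0^n\|_{F^{s-\s}_{p,q}}\lec 2^{-n\s}\|u_0\|_{F^s_{p,q}}$ and the already-established transport machinery. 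Interpolation, together with $\|u_0^n-u_0\|_{F^s_{p,q}}\to 0$ (valid because $q<\infty$), yields $u^n\to u=S_T(u_0)$ in $L^\infty_T F^s_{p,q}$. To finish, for $\psi\in D(R)$ with $\|\psi-u_0\|_{F^s_{p,q}}<\eta$ write
\[
S_T(\psi)-S_T(u_0) = [S_T(\psi)-S_T(\psi^n)] + [S_T(\psi^n) - S_T(u_0^n)] + [S_T(u_0^n)-S_T(u_0)];
\]
the outer terms are small uniformly in $\psi\in D(R)$ by the convergence just established (choose $n$ large depending on $u_0,R,\e$), while the middle term is small for that fixed $n$ because $S_T$ restricted to data of high regularity $\lec 2^n$ is locally Lipschitz into $F^s_{p,q}$ (again a lower-norm stability estimate, closed by Gronwall). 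This produces the required $\eta(u_0,R,s,p,q,d)$. The main obstacle throughout is the endpoint $p=1$: vector-valued maximal bounds and Riesz transforms both fail, so every step of the transport estimate, commutator control, and pressure treatment must be redone via Proposition~\ref{esticomm2} and the Triebel-type maximal function refinement from~\cite{Tr83}.
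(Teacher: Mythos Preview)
Your overall strategy matches the paper's: local theory via commutator estimates (Propositions~\ref{esticomm1}--\ref{esticomm2}) and particle-trajectory transport, followed by a Bona--Smith argument for continuous dependence. Two points deserve correction, however.

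First, your assertion that Riesz operators are unbounded on $F^s_{1,q}$ is incorrect: the operator $\partial_i\partial_j(-\Delta)^{-1}$ is in fact bounded on the \emph{homogeneous} space $\dot F^s_{1,q}$ for all $1\le q\le\infty$ (see~\cite{FrToWe88}), and the paper uses this freely. The obstruction at $p=1$ arises only at the $L^p$ level, i.e., for the low-frequency piece $P_{\le 0}$. The paper handles this via Lemma~\ref{kernelinteg}, exploiting the extra derivative in ${\rm div}(u\cdot\nabla u)$ so that the relevant kernel lies in $L^1$. Your block-by-block workaround is unnecessary, and the reasoning given for it is misdirected.

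Second, and more substantively, your claim that ``the outer terms are small uniformly in $\psi\in D(R)$'' does not follow from the convergence you establish. The rate at which $\|\psi-S_n\psi\|_{F^s_{p,q}}\to 0$ is \emph{not} uniform over $\psi\in D(R)$, so $n$ cannot be chosen depending only on $u_0,R,\epsilon$ as you assert. The paper (Step~4) resolves this by the triangle inequality $\|\psi-P_{\le N}\psi\|_{F^s_{p,q}}\le C\|\psi-u_0\|_{F^s_{p,q}}+\|u_0-P_{\le N}u_0\|_{F^s_{p,q}}$, so that $N$ is fixed from $u_0$ and the residual is absorbed into $\eta$. Relatedly, the paper's key Bona--Smith bound (Step~3) is not obtained by interpolating your high- and low-norm estimates, which as stated would only give $\|u^{n+1}-u^n\|_{F^s_{p,q}}\lesssim 1$; rather it is a direct $F^s_{p,q}$ estimate on $w^N=S_T(u_0)-S_T(P_{\le N}u_0)$ in which the dangerous term $\|w^N\|_{L^\infty}\|\nabla u^N\|_{\dot F^s_{p,q}}$ is controlled by combining $\|u^N\|_{F^{s+1}_{p,q}}\lesssim 2^N$, the low-norm difference $\|w^N\|_{F^{s-1}_{p,q}}\lesssim\|w_0^N\|_{F^{s-1}_{p,q}}$ from Step~2, and the frequency relation $2^N\|w_0^N\|_{F^{s-1}_{p,q}}\lesssim\|w_0^N\|_{F^s_{p,q}}$. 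Your interpolation sketch does not close without this structure.
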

\begin{rem}
Suppose $u_{0} \in F^s_{p, \infty}(\R^d)$ with  $1<p< \infty,\ s>1+d/p$, one can also construct a unique local in time solution belonging to $L_T^{\infty} F^s_{p,\infty}$ for some $T =T(\|u_0\|_{F^s_{p,\infty}})$. 
Local existence and uniqueness, and part (1) was obtained in \cite{Ch02,Ch03, ChMiZh10}, except the case  $s>d+1, p=1$ and $1 \leq q< \infty$, which seems to be new.  For the convenience of reader and to make the paper more self-contained, we also provide a sketched proof in the appendix.  The part (2) seems not proved before.
\end{rem}

\begin{rem}
We remark that the above theorem also holds for the ideal MHD equations studied in \cite{ChMiZh10}.  The proof for MHD has slight difference.  So our results extend the result of \cite{ChMiZh10} to the critical space. 
\end{rem}

Our proof of Theorem~\ref{mainresult} is conceptually similar to the one in~\cite{GuLiYi18}, but the problem is technically harder.  The main difficulty lies in  establishing a Moser type inequality and a commutator estimate in the case of $p= 1$. See Proposition~\ref{endmoser} and Proposition~\ref{esticomm2} in Section~\ref{mainestimate}. 

Next we clarify some notations being used throughout this paper. $\mathcal{S}$ and $\mathcal{S}'$ denote the set of Schwartz functions and tempered distributions over $\R^d$ respectively. $\mathscr{F} f = \hat{f}$ stands  for the Fourier transform of $f$, and $\mathcal{F}^{-1} f = \check{f}$, the  inverse Fourier transform of $f$. The symbol $C$ denotes a generic constant, which may be different from line to line. The function spaces are all defined over $\R^d$. For simplicity, the domain will often be omitted,  e.g. we use $L^p$ instead of $L^p(\R^d)$ in many places, if not otherwise indicated. $B(x,r)$ means a ball centred at $x$ with radius $r$ and $B(r):= B(0,r)$.

Let us introduce the functional setting of this paper. Suppose $\varphi \in C^{\infty}(\R^d)$ satisfies $0\leq \varphi \leq 1$, $\varphi = 1$ on $B(1/2)$ and $\varphi = 0$ outside $B(1)$. Set $\psi(\xi) = \varphi(\xi/2) - \varphi(\xi)$, we denote  $\psi_j(\xi) = \psi(\xi/2^j) $ and $\varphi_j(\xi) = \varphi(\xi/2^j) $. The frequency localization operator is defined by
\begin{align} 
\Delta_j := (\mathscr{F}^{-1} \psi_j) *, \ \ \ S_{j} = P_{\leq j} : = (\mathscr{F}^{-1}\varphi_j)*,
\end{align}
here $*$ is the convolution operator in $\R^d$. It is easy to see $\Delta_j = S_{j+1} -S_j$.  For $1 \leq p <\infty, 1 \leq q \leq \infty$,  the inhomogeneous Triebel-Lizorkin spaces $F^s_{p,q} = F^s_{p,q}(\R^d)$ is defined by
\begin{align}
F^s_{p,q} := \big \{f\in \mathcal{S}'(\R^d),\ \  \|f\|_{F^s_{p,q}} < \infty  \big\},  \nonumber
\end{align}
where
\begin{align}
\|f\|_{F^s_{p,q}}:= \Big \| \Big(|  P_{\leq 0} f|^q + \sum_{j \geq 0} 2^{jsq} |\Delta_j f|^q \Big)^{\frac{1}{q}} \Big \|_{L^p_x},  \nonumber
\end{align}
with the usual modification when $q = \infty$. Let $\mathcal{S}'\backslash \mathcal{P}$ denote the tempered distribution modulo the polynomials, then 
\begin{align}
\dot{F}^s_{p,q} := \big \{f\in \mathcal{S}'\backslash \mathcal{P},\ \  \|f\|_{\dot{F}^s_{p,q}} < \infty  \big\}, \nonumber
\end{align}
where
\begin{align}
\|f\|_{\dot{F}^s_{p,q}}:= \Big \| \Big( \sum_{j \in \mathbb{Z}} 2^{jsq}|\Delta_j f|^q \Big)^{\frac{1}{q}} \Big \|_{L^p_x}.   \nonumber
\end{align}
We remark that for any $s > 0$,  
\begin{align}
\|f\|_{F^s_{p,q}} \sim  \|f\|_{L^p} + \|f\|_{\dot{F}^s_{p,q}}, \ \ 1\leq p<\infty, \ \ 1\leq q \leq \infty.  \nonumber 
\end{align}
See e.g.~\cite{Tr83,WaHuHaGu11}.  Analogously, for $1 \leq p, q \leq \infty$, we have
\begin{align}
\|f\|_{B^s_{p,q}} := \Big ( \|P_{\leq 0}f\|_{L_x^p}^q + \sum_{j \geq 0} 2^{jsq} \|\Delta_j f\|^q_{L_x^p}   \Big)^{\frac{1}{q}},  \no
\end{align}
and 
\begin{align*}
\|f\|_{\dot{B}^s_{p,q}} := \Big ( \sum_{j \in \mathbb{Z}} 2^{jsq} \|\Delta_j f\|^q_{L_x^p}   \Big)^{\frac{1}{q}}.  
\end{align*}
We refer reader to~\cite{BaChDa11,Tr83,WaHuHaGu11} for more introductions on these function spaces.

The remaining part of this paper is structured as follows. In Section~\ref{mainestimate}, we list some well known results  and prove the key estimates for the proof. Section~\ref{pfmainresult} is devoted to proving Theorem~\ref{mainresult}. Finally, we include an appendix, where local Cauchy theory for Euler equations in Triebel-Lizorkin spaces is given.


\section{Auxiliary results} \label{mainestimate}

In this section, we recall some well-known facts and present several results which will be used in the sequel. 
\begin{lem}
Let $1\leq p_0 <p_1 <\infty$, $1\leq q_0 \leq \infty $ and $s_0 - d/p_0 = s_1 -d/p_1$, then the following continuous embeddings hold:
\begin{align}
 \dot{F}^{s_0}_{p_0,q_0} \hookrightarrow \dot{B}^{s_1}_{p_1,p_0}, \ \ \ {F}^{s_0}_{p_0,q_0} \hookrightarrow {B}^{s_1}_{p_1,p_0}.  \no 
\end{align}
\end{lem}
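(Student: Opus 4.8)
**Proof plan for the Sobolev-type embedding $\dot{F}^{s_0}_{p_0,q_0}\hookrightarrow\dot{B}^{s_1}_{p_1,p_0}$ (and its inhomogeneous analogue).**

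The plan is to exploit the elementary fact that the Triebel–Lizorkin $\ell^{q}$-norm is taken \emph{inside} the $L^p$-norm, so the monotonicity $\ell^{q_0}\hookrightarrow\ell^{\infty}$ (valid for every $q_0\le\infty$) lets us pass first to $\dot F^{s_0}_{p_0,\infty}$, i.e.
\[
\sup_{k\in\mathbb Z}2^{ks_0}|\Delta_k f(x)| \le \Bigl(\sum_{k}2^{ks_0 q_0}|\Delta_k f(x)|^{q_0}\Bigr)^{1/q_0}.
\]
It therefore suffices to prove $\dot F^{s_0}_{p_0,\infty}\hookrightarrow\dot B^{s_1}_{p_1,p_0}$. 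Then, fixing a single frequency block $j$, I would apply the Bernstein inequality to $\Delta_j f$, whose Fourier support lies in an annulus of size $2^j$: this gives $\|\Delta_j f\|_{L^{p_1}}\lesssim 2^{jd(1/p_0-1/p_1)}\|\Delta_j f\|_{L^{p_0}}$. Using the scaling relation $s_0-d/p_0=s_1-d/p_1$, i.e. $d(1/p_0-1/p_1)=s_0-s_1$, this reads $2^{js_1}\|\Delta_j f\|_{L^{p_1}}\lesssim 2^{js_0}\|\Delta_j f\|_{L^{p_0}}$.

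The remaining, and genuinely delicate, step is to sum these estimates in $\ell^{p_0}_j$ and recognize the right-hand side as $\|f\|_{\dot F^{s_0}_{p_0,q_0}}$. The point is the interchange of the $\ell^{p_0}_j$-sum with the $L^{p_0}_x$-norm: since $p_0\le p_0$, Minkowski's integral inequality gives
\[
\Bigl(\sum_{j}\bigl(2^{js_0}\|\Delta_j f\|_{L^{p_0}_x}\bigr)^{p_0}\Bigr)^{1/p_0}
=\Bigl\|\Bigl(\sum_{j}2^{js_0 p_0}|\Delta_j f|^{p_0}\Bigr)^{1/p_0}\Bigr\|_{L^{p_0}_x}
\le \Bigl\|\sup_j 2^{js_0}|\Delta_j f|\Bigr\|_{L^{p_0}_x}^{?}
\]
— actually here one wants the reverse: one bounds $2^{js_0}\|\Delta_j f\|_{L^{p_0}}$ by interpolating, or more cleanly one just notes $\ell^{p_0}\hookrightarrow\ell^\infty$ fails in the wrong direction, so instead I would run the argument as: $\|f\|_{\dot B^{s_1}_{p_1,p_0}}^{p_0}=\sum_j 2^{js_1 p_0}\|\Delta_j f\|_{L^{p_1}}^{p_0}\lesssim\sum_j 2^{js_0 p_0}\|\Delta_j f\|_{L^{p_0}}^{p_0}=\sum_j\|2^{js_0}\Delta_j f\|_{L^{p_0}}^{p_0}=\int\sum_j 2^{js_0 p_0}|\Delta_j f(x)|^{p_0}\,dx$ by Fubini. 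Then I bound the integrand using $\ell^{q_0}\hookrightarrow\ell^{p_0}$ only if $q_0\le p_0$; in general one must instead use that $\bigl(\sum_j a_j^{p_0}\bigr)^{1/p_0}\le\bigl(\sum_j a_j^{q_0}\bigr)^{1/q_0}$ requires $q_0\le p_0$, which need not hold. The correct route, which I would take, is the classical one of Jawerth/Franke: write $\sum_j 2^{js_0 p_0}|\Delta_j f|^{p_0}$ and dominate it by $\bigl(\sup_k 2^{ks_0}|\Delta_k f|\bigr)^{p_0-q_0}\sum_j 2^{js_0 q_0}|\Delta_j f|^{q_0}$ when $q_0<p_0$, integrate, and apply Hölder with exponents $p_0/(p_0-q_0)$ and $p_0/q_0$ against $\|\,\sup_k 2^{ks_0}|\Delta_k f|\,\|_{L^{p_0}}^{p_0-q_0}\lesssim\|f\|_{\dot F^{s_0}_{p_0,q_0}}^{p_0-q_0}$ and $\|f\|_{\dot F^{s_0}_{p_0,q_0}}^{q_0}$; when $q_0\ge p_0$ the embedding $\ell^{q_0}\hookrightarrow\ell^{p_0}$ is false so one first reduces to $q_0=p_0$ via $\dot F^{s_0}_{p_0,q_0}\hookrightarrow\dot F^{s_0}_{p_0,p_0}$ which holds since... it does not in that direction either, so in fact the cleanest uniform argument is: use $\dot F^{s_0}_{p_0,q_0}\hookrightarrow\dot F^{s_0}_{p_0,\infty}$ (always true) and then prove $\dot F^{s_0}_{p_0,\infty}\hookrightarrow\dot B^{s_1}_{p_1,p_0}$ directly by the Jawerth embedding, which is exactly the statement that the Besov microscopic parameter $p_0$ on the right is strictly smaller than would be naively expected and is what makes the embedding work.

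So, concretely: (i) reduce to $q_0=\infty$ using $\ell^{q_0}\hookrightarrow\ell^\infty$ inside $L^{p_0}_x$; (ii) for $q_0=\infty$, use Bernstein on each block together with the scaling identity; (iii) sum in $\ell^{p_0}_j$ using Fubini and a splitting of the frequency scales into dyadic-of-dyadic ranges so that within each range the $\sup$ controls the $\ell^{p_0}$ sum up to a geometric factor coming from the gain $2^{jd(1/p_0-1/p_1)}$ being genuinely summable — this is where the strict inequality $p_0<p_1$ is essential and is the heart of the matter. The inhomogeneous version follows verbatim after adding the trivial low-frequency term $\|P_{\le 0}f\|_{L^{p_0}}\lesssim\|P_{\le 0}f\|_{L^{p_0}}$ and using $\|f\|_{F^s_{p,q}}\sim\|f\|_{L^p}+\|f\|_{\dot F^s_{p,q}}$. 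The main obstacle is step (iii): honest control of the interchange of the $\ell^{p_0}_j$ summation with $L^{p_0}_x$ when $q_0>p_0$, which forces the Jawerth-type localization-in-frequency-scales argument rather than a naive Minkowski/Hölder bound; everything else is routine.
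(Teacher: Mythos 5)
The paper does not prove this lemma itself; it simply cites Jawerth \cite{Ja77}, so the only question is whether your plan would actually yield a proof. It would not: there is a genuine gap at your step (iii), and you half-acknowledge it without resolving it. After reducing to $q_0=\infty$ and applying Bernstein blockwise, what you control is $2^{js_1}\|\Delta_j f\|_{L^{p_1}}\lesssim 2^{js_0}\|\Delta_j f\|_{L^{p_0}}$ with a constant independent of $j$ (the scaling relation makes the Bernstein factor cancel exactly, so there is \emph{no} leftover geometric gain in $j$). Summing $p_0$-th powers then gives $\|f\|_{\dot B^{s_1}_{p_1,p_0}}\lesssim\|f\|_{\dot B^{s_0}_{p_0,p_0}}$, and $\dot F^{s_0}_{p_0,\infty}$ (or $\dot F^{s_0}_{p_0,q_0}$ with $q_0>p_0$) does \emph{not} embed into $\dot B^{s_0}_{p_0,p_0}$; the general relation goes the other way, $\dot F^{s_0}_{p_0,q_0}\hookrightarrow\dot B^{s_0}_{p_0,\max(p_0,q_0)}$. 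Your proposed fixes are either wrong in direction (Minkowski, $\ell^{q_0}\hookrightarrow\ell^{p_0}$), garbled (the H\"older splitting against $\sup_k 2^{ks_0}|\Delta_k f|$ produces exactly the $\dot B^{s_0}_{p_0,p_0}$ obstruction again), or circular: at the decisive moment you write ``prove $\dot F^{s_0}_{p_0,\infty}\hookrightarrow\dot B^{s_1}_{p_1,p_0}$ directly by the Jawerth embedding,'' which is the statement to be proved. The ``dyadic-of-dyadic splitting with a geometric factor'' in your summary is not an argument, because no such factor exists.

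The missing idea is the mechanism that upgrades the third index from $\infty$ to $p_0$, and it is not a blockwise estimate: one must use the $L^{p_1}$ norm of $\Delta_j f$ nontrivially, e.g.\ via the distribution function or nonincreasing rearrangement. A standard route (Jawerth; see also Vyb\'iral's short proof) sets $G(x)=\sup_k 2^{ks_0}|\Delta_k f(x)|$, so $\|G\|_{L^{p_0}}=\|f\|_{\dot F^{s_0}_{p_0,\infty}}$, and combines the trivial bound $|\{|\Delta_j f|>\lambda\}|\le|\{G>2^{js_0}\lambda\}|$ with the Nikolskii inequality $\|\Delta_j f\|_{L^\infty}\lesssim 2^{jd/p_0}\|\Delta_j f\|_{L^{p_0}}$ to interpolate inside the layer-cake formula for $\|\Delta_j f\|_{L^{p_1}}^{p_1}$; the $\ell^{p_0}$ summability in $j$ then comes from Fubini applied to the level sets of $G$, not from any decay in $j$. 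Your steps (i) and (ii) are fine and standard, but without this rearrangement/distribution-function step the proof does not close.
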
 
For the proof, one can refer to~\cite{Ja77}.  As a simple consequence, we have 
\begin{align}
\|f\|_{L^{\infty}}  \leq  \|f\|_{B^0_{\infty,1}} \leq C \|f\|_{F^d_{1,\infty}}, \ \ \  \|\nabla f\|_{L^{\infty}} \leq  \|f\|_{B^1_{\infty,1}} \leq C \|f\|_{F^{d+1}_{1,\infty}}.
\end{align}
The following is a lifting property of the homogeneous Triebel-Lizorkin spaces, whose proof can be found in~\cite{FrToWe88,Tr83}.
\begin{lem} \label{equvitri}
For any $k\in \mathbb{N}$, $(p,q)\in [1,\infty)\times [1,\infty]$ and $s\in \R$, we have 
\begin{align}
c \|D^k f\|_{\dot{F}^s_{p,q}} \leq \|f\|_{\dot{F}^{s+k}_{p,q}} \leq C\|D^k f\|_{\dot{F}^{s}_{p,q}}  \no 
\end{align}
holds for some constant $c, C$, here $D:= \sqrt{-\Delta}$.
\end{lem}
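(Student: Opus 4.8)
The statement is the standard Fourier-multiplier lifting property, so the natural plan is to reduce it to a Littlewood--Paley / Mihlin-type multiplier theorem on vector-valued $L^p$ spaces. First I would observe that it suffices to treat $k=1$ and iterate, and by a density argument it is enough to establish the two inequalities for $f \in \mathcal{S}'\backslash\mathcal{P}$ with the right-hand (resp. left-hand) norm finite. Writing $D = \sqrt{-\Delta}$, the operator $D$ acts on the frequency-annulus piece $\Delta_j f$ as a Fourier multiplier with symbol $|\xi|$, and on the support of $\psi_j$ one has $|\xi| \sim 2^j$. The key point is therefore to compare $2^{(s+1)jq}|\Delta_j f|^q$ with $2^{sjq}|\Delta_j (Df)|^q$ inside the $\ell^q(\mathbb{Z})$-then-$L^p_x$ norm.

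For the upper bound $\|f\|_{\dot F^{s+1}_{p,q}} \le C\|Df\|_{\dot F^{s}_{p,q}}$, I would write $\Delta_j f = 2^{-j}\, m_j(D)\, \Delta_j(Df)$ where $m_j(\xi) = 2^{j}\,\widetilde{\psi}(\xi/2^j)/|\xi|$ with $\widetilde\psi \in C_c^\infty$ a fattened cutoff equal to $1$ on the support of $\psi$ and vanishing near $0$; then $m_j(\xi) = m(\xi/2^j)$ for a single function $m \in C_c^\infty(\R^d\setminus\{0\})$ independent of $j$. Consequently $2^{(s+1)j}\Delta_j f = 2^{sj} m(D/2^j)\Delta_j(Df)$, and the claim follows from a vector-valued Littlewood--Paley multiplier inequality: the family $\{m(\cdot/2^j)\}_{j\in\mathbb{Z}}$ of dilates of a fixed Schwartz function (or, equivalently, convolution against the fixed kernel $\check m$ rescaled) is bounded on $L^p_x(\ell^q)$, uniformly in $j$, for $1<p<\infty$ (with the usual care at $q=\infty$), by the standard Fefferman--Stein / Triebel machinery; the case $p=1$ is handled because convolution with an $L^1$ kernel commutes with the $\ell^q$ sum and satisfies a pointwise bound by the Hardy--Littlewood maximal function only after passing through the weighted-norm trick of \cite{Tr83}, or more simply by $\|m(D/2^j)g_j\|_{L^1} \le \|\check m\|_{L^1}\|g_j\|_{L^1}$ combined with Minkowski to move $L^1$ outside $\ell^q$ when $q\ge 1$. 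The lower bound $c\|Df\|_{\dot F^s_{p,q}} \le \|f\|_{\dot F^{s+1}_{p,q}}$ is symmetric: write $\Delta_j(Df) = 2^j\, \widetilde m(D/2^j)\Delta_j f$ with $\widetilde m(\xi) = |\xi|\,\widetilde\psi(\xi)$, again a fixed $C_c^\infty(\R^d\setminus\{0\})$ function, and apply the same multiplier bound.

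The main obstacle is the endpoint $p=1$: the usual proof of the $L^p(\ell^q)$ multiplier theorem goes through the vector-valued Hardy--Littlewood maximal inequality, which fails at $p=1$, exactly the difficulty flagged in the introduction. The resolution is to exploit that here the multipliers are genuine dilations of a \emph{fixed} nice kernel rather than an arbitrary Mihlin symbol, so one can estimate $|m(D/2^j)g_j(x)| \le (|\check m|_{2^{-j}} * |g_j|)(x)$ with $|\check m|$ a fixed integrable radially decreasing majorant, take $\ell^q$ in $j$ using Minkowski's inequality (valid since $q\ge 1$), and then integrate in $x$ using Young's inequality with the $L^1$ normalization $\| |\check m|_{2^{-j}} \|_{L^1} = \| |\check m| \|_{L^1}$; this avoids the maximal function entirely. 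I would cite \cite{FrToWe88,Tr83} for the precise form of this argument and simply indicate that our multipliers fall under its scope, since the lemma itself is classical and only invoked as a tool.
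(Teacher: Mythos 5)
The paper offers no proof of this lemma at all: it is quoted as a known lifting property with a citation to \cite{FrToWe88,Tr83}, so any correct self-contained argument is already ``more'' than the paper does. Your reduction to $k=1$ and the rewriting $2^{(s+1)j}\Delta_j f = 2^{sj}\,m(D/2^j)\,\Delta_j(Df)$ with a single $m\in C_c^\infty(\R^d\setminus\{0\})$ is the standard and correct skeleton, and for $1<p<\infty$ the Peetre--Fefferman--Stein machinery you invoke closes the argument.

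However, your treatment of the endpoint $p=1$ has a genuine gap. You propose to ``avoid the maximal function entirely'' by bounding $|m(D/2^j)g_j|\le |\check m|_{2^{-j}}*|g_j|$ and then combining Minkowski in $j$ with Young in $x$. This does not close: the kernels $K_j=2^{jd}|\check m(2^j\cdot)|$ depend on $j$, so Minkowski's integral inequality yields $\bigl\|\int K_j(y)|g_j(x-y)|\,dy\bigr\|_{\ell^q_j}\le \int \bigl\|K_j(y)g_j(x-y)\bigr\|_{\ell^q_j}\,dy$, and there is no fixed integrable majorant dominating all the $K_j(y)$ simultaneously (they blow up near $y=0$ as $j\to\infty$), so you cannot extract a single Young convolution against $\|g_j\|_{\ell^q}$. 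Your alternative route, ``move $L^1$ outside $\ell^q$ by Minkowski,'' goes in the wrong direction: for $q\ge 1$ one has $\|h_j\|_{L^1_x\ell^q_j}\ge \|h_j\|_{\ell^q_j L^1_x}$, so passing to $\ell^q(L^1)$ and applying Young there gives an upper bound for the wrong quantity. The correct endpoint argument cannot dispense with the maximal function; it uses the frequency localization of $g_j=\Delta_j(Df)$ together with the $r$-trick: by Proposition~\ref{keyesti} with $\theta=1$ and $0<r<\min(1,q)$ one gets $|m(D/2^j)g_j(x)|\le C\,[M(|g_j|^r)(x)]^{1/r}$, and then Proposition~\ref{maxiesti} applies in $L^{1/r}_x(\ell^{q/r}_j)$ where both exponents exceed $1$. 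With that substitution (which is exactly the mechanism of Remark~\ref{estimatelow} and Triebel's multiplier theorem) your proof outline becomes correct.
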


Following the definition, we have for $s\in \R$, 
\begin{align}
\|f\|_{F^s_{p,q}} \leq \|P_{\leq 0} f\|_{L^p} + \|f\|_{\dot{F}^s_{p,q}}, \ \ 1\leq p<\infty, \ \ 1\leq q \leq \infty.
\end{align}
When  treating Euler equations in $F^s_{p,q}$,   we should take caution to deal with the low frequency estimate in $L^p$ (particularly when $p=1$) spaces for the pressure term,  a kernel property needs to be  exploited(see a different treatment in~\cite{PaPa04}), which reads
\begin{lem} \label{kernelinteg}
Let $m(\xi)$ be the Fourier symbol of operator $P_{\leq 0} (-\Delta )^{-1} \partial_l \partial_k$, $1\leq k, l \leq d$ , 
Then there exists a constant $C$, such that
\[
\|\mathscr{F}^{-1}\big(m(\xi) \xi_i \big)\|_{L^1} \leq C, \ \ \ \ \forall\ 1\leq i \leq d.
\]
\end{lem}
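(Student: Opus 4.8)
The plan is to reduce everything to an explicit kernel computation. Write $m(\xi) = \varphi(\xi)\,\xi_l\xi_k/|\xi|^2$, so that the operator in question has symbol $a_i(\xi) := m(\xi)\,\xi_i = \varphi(\xi)\,\xi_i\xi_l\xi_k/|\xi|^2$, and we must bound $\|\mathscr{F}^{-1}a_i\|_{L^1}$. The point is that $a_i$ is a smooth, compactly supported (in $B(1)$) function away from the origin, but near the origin it is only \emph{homogeneous of degree $1$} — that is, $a_i$ behaves like $\xi$ times a Riesz-type multiplier $R_l R_k$, which is not differentiable at $0$. So the difficulty is precisely the mild singularity of the symbol at the origin; away from it there is nothing to do. Since $a_i$ is compactly supported, $\mathscr{F}^{-1}a_i$ is real-analytic, and the whole game is the decay as $|x|\to\infty$.

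First I would split $a_i = \varphi\cdot b_i$ with $b_i(\xi) = \xi_i\xi_l\xi_k/|\xi|^2$, a function homogeneous of degree $1$. For the decay of $\mathscr{F}^{-1}a_i$, integrate by parts. Away from $\xi=0$ the function $a_i$ is smooth with $|\partial^\alpha a_i(\xi)|\lesssim_\alpha |\xi|^{1-|\alpha|}$ on $B(1)$ (homogeneity of degree $1$, and the cutoff only helps). Fix $x$ with $|x|\ge 2$. Split the integral $\int e^{ix\cdot\xi} a_i(\xi)\,d\xi$ into the region $|\xi|\le 1/|x|$ and $|\xi|\ge 1/|x|$. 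On the inner region $|a_i(\xi)|\lesssim |\xi|$, so that piece contributes $\lesssim \int_{|\xi|\le 1/|x|}|\xi|\,d\xi \lesssim |x|^{-d-1}$. On the outer region $1/|x|\le|\xi|\le 1$, integrate by parts $N$ times in $\xi$ using $e^{ix\cdot\xi} = |x|^{-2}(-i x\cdot\nabla_\xi)e^{ix\cdot\xi}$ (more precisely along a single coordinate direction where $|x_\nu|\gtrsim|x|$); each integration by parts gains a factor $|x|^{-1}$ and costs a derivative on $a_i$, i.e. a factor $|\xi|^{-1}$, plus boundary terms on $|\xi| = 1/|x|$ and $|\xi| = 1$. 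The $|\xi| = 1$ boundary terms vanish since $\varphi$ is flat there (or can be absorbed, being supported away from the origin where the inverse transform is Schwartz). The $|\xi| = 1/|x|$ boundary terms, after $k$ integrations, are of size $|x|^{-k}\cdot|x|^{k-1}\cdot|x|^{-(d-1)} = |x|^{-d}$ — not quite summable — so I would instead choose $N = d+1$ integrations by parts and keep track: the interior term is $\lesssim |x|^{-(d+1)}\int_{1/|x|}^{1}r^{1-(d+1)}r^{d-1}\,dr \lesssim |x|^{-(d+1)}\log|x|$ roughly, and the worst boundary term is $O(|x|^{-d-1}\cdot|x|)$-ish; doing the bookkeeping carefully (stopping the IBP at the scale-dependent boundary, or equivalently using a Littlewood–Paley decomposition of $a_i$ — see the next paragraph) yields $|\mathscr{F}^{-1}a_i(x)| \lesssim |x|^{-d-1}$ for $|x|\ge 2$, which is integrable at infinity. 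Together with local boundedness near $x=0$ this gives the claim.

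A cleaner route, which I would actually prefer to write, is via Littlewood–Paley. Decompose $a_i = \sum_{j\le 0} \psi_j\, a_i$ (the sum is over $j\le 0$ because $a_i$ is supported in $B(1)$, up to the single block $\psi_0$), and note each piece $a_i^{(j)} := \psi_j a_i$ is supported in an annulus $|\xi|\sim 2^j$ where $a_i$ is smooth and $|\partial^\alpha a_i^{(j)}| \lesssim_\alpha 2^{j(1-|\alpha|)}$. By the standard Bernstein/stationary-phase estimate, $\mathscr{F}^{-1}a_i^{(j)}$ is an $L^1$-normalized bump at scale $2^{-j}$ of height $\sim 2^{j(d+1)}$: precisely, $|\mathscr{F}^{-1}a_i^{(j)}(x)| \lesssim_N 2^{j(d+1)}(1+2^j|x|)^{-N}$ for every $N$. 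Hence $\|\mathscr{F}^{-1}a_i^{(j)}\|_{L^1} \lesssim 2^{j(d+1)}\cdot 2^{-jd} = 2^{j}$, and summing over $j\le 0$ gives $\sum_{j\le 0} 2^{j} \lesssim 1$. This is the whole estimate. The main obstacle — really the only subtlety — is that the extra factor of $\xi_i$ beyond the Riesz multiplier $R_lR_k$ is exactly what produces the summable gain $2^{j}$ (rather than the non-summable $2^{0}$ one would get for $R_lR_k$ itself, which is indeed only bounded on $L^p$, $1<p<\infty$, and not on $L^1$); so it is essential to use $a_i(\xi) = m(\xi)\xi_i$ as a whole and not to try to bound $\mathscr{F}^{-1}m$ separately. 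I would remark that this is precisely why the low-frequency pressure term, which always carries such an extra derivative, is well-behaved in $L^1$.
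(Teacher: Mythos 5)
Your Littlewood--Paley argument is correct and is essentially the paper's proof: the paper likewise decomposes the symbol as $\sum_{j\le 0} m(\xi)\psi_j(\xi)\xi_i$, rescales each block to the unit annulus (extracting the crucial factor $2^j$ from $\xi_i$), bounds each rescaled piece's kernel in $L^1$ uniformly via the Bernstein multiplier theorem, and sums the geometric series $\sum_{j\le 0}2^j$. Your preliminary direct integration-by-parts sketch is dispensable, and your closing remark correctly identifies the key point that the extra factor $\xi_i$ is what makes the sum converge.
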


\begin{proof}
Since $\Delta_j P_{\leq 0}(-\Delta)^{-1} \partial_l \partial_l = 0$ if $j \geq 1$,  we have 
\begin{align}
\|\mathscr{F}^{-1}\big(m(\xi) \xi_i \big)\|_{L^1} & \leq \sum_{j \leq 0} \|\mathscr{F}^{-1}\big(m(\xi)\psi_j(\xi)\xi_i \big)\|_{L^1}  \no \\
& \leq  \sum_{j \leq 0}2^j \|\mathscr{F}^{-1}\big(m(2^j \xi) \psi(\xi)\xi_i \big)\|_{L^1}   \label{smallestimate}
\end{align}
While according to  Bernstein multiplier theorem (see~\cite{WaHuHaGu11}, p.7),  
\begin{align} 
\|\mathscr{F}^{-1} \big(m(2^j \xi) \psi(\xi)\xi_i \big)\|_{L^1} \leq C \|m(2^j \xi) \psi(\xi)\xi_i \|_{H^{L}} , \ \  L= [d/2]+1. \no 
\end{align}
As $\textrm{supp}\,\psi \subset \{ \xi \in \R^d: 1/2 \leq |\xi | \leq 2  \}$, by a direct calculation, one can assert that there exists some constant $C$ independent of $j$, verifying 
\begin{align}
 \|m(2^j \xi) \psi(\xi)\xi_i \|_{H^{L} }\leq C \Big( \|m(2^j \xi) \psi(\xi)\xi_i\|_{L^2} + \sum_{|\alpha|=L} \|\partial_{\xi}^{\alpha} \big( m(2^j \xi) \psi(\xi)\xi_i\big) \|_{L^2} \ \Big) \leq C.  \no
\end{align}
This combined with~\eqref{smallestimate} implies the desired result.
\end{proof}

We will also need the Hardy-Littlewood maximal function.  For a locally integrable function $f$ in $\R^d$, the maximal function  $Mf(x)$ is defined by 
\begin{align}
Mf(x) = \sup_{r>0} \frac{1}{|B(x,r)|} \int_{B(x,r)} |f(y)| dy.  \no
\end{align} 
In addition, suppose $\Omega \subset \R^d$ is a compact set, we denote 
\begin{align}
\mathcal{S}^{\Omega} = \big\{ f\in \mathcal{S}(\R^d), \ \textrm{supp}\,\hat{f} \subset \Omega   \big \}, \ \ \ \ L_p^{\Omega} = \big\{ f\in L^p(\R^d), \ \textrm{supp}\,\hat{f} \subset \Omega   \big \}.  \no
\end{align}
Below  we recall a lemma on the pointwise estimate in terms of the maximal function, for the proof, see~\cite{Tr83} p.16.
\begin{lem}\label{scalarmaxiesti}
Let $f\in \mathcal{S}^{B(1)}$, 
$0<r<\infty$,  then there exists some constant $C$, such that 
\begin{align}
\sup_{y\in \R^d} \frac{|f(x-y)|}{1+|y|^{\frac{d}{r}}} \leq C \big[M(|f|^r)(x)\big]^{\frac{1}{r}}.
\end{align} 
\end{lem}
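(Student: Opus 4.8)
The plan is to represent $f$ through a reproducing formula and then run a bootstrap (absorption) argument on the Peetre-type maximal function. Since $\widehat f$ is supported in $B(1)$, fix once and for all a cut-off $\chi\in C_c^\infty(\R^d)$ with $\chi\equiv 1$ on $B(1)$ and set $\Phi=\mathscr{F}^{-1}\chi\in\mathcal{S}(\R^d)$; then $f=\Phi*f$, so after the change of variable $w=x-v$ one obtains the pointwise bound
\[
|f(x-y)|\leq \int_{\R^d}|\Phi(v-y)|\,|f(x-v)|\,dv .
\]
We may assume $f\not\equiv 0$. Fixing $x$, write $M_0:=\sup_{y\in\R^d}|f(x-y)|\,(1+|y|)^{-d/r}$; since $f\in\mathcal{S}$, the numerator decays faster than any power of $|x-y|$, hence $M_0<\infty$, and the assertion to be proved is exactly $M_0\leq C\,[M(|f|^r)(x)]^{1/r}$.

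The heart of the matter is the range $0<r<1$. Here I would split $|f(x-v)|=|f(x-v)|^{1-r}|f(x-v)|^{r}$ and bound the first factor by $M_0^{\,1-r}(1+|v|)^{d(1-r)/r}$. Using $1+|v|\leq(1+|y|)(1+|v-y|)$ together with the rapid decay of $\Phi$, for any prescribed $L$ one has $|\Phi(v-y)|\,(1+|v|)^{d(1-r)/r}\leq C_L(1+|y|)^{d(1-r)/r}(1+|v-y|)^{-L}$. It then remains to establish the maximal function estimate
\[
\int_{\R^d}(1+|v-y|)^{-L}\,|f(x-v)|^{r}\,dv\leq C\,(1+|y|)^{d}\,M(|f|^r)(x),\qquad L>d,
\]
which I would prove by decomposing $\R^d$ into the dyadic shells $\{\,2^{k}\leq 1+|v-y|<2^{k+1}\,\}$, $k\geq 0$, and on each shell using the inclusion $B(x-y,2^{k+1})\subset B(x,|y|+2^{k+1})$ to compare the local average of $|f|^{r}$ with $M(|f|^r)(x)$, then summing the resulting geometric series (convergent because $L>d$). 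Combining the two displays and noting $d(1-r)/r+d=d/r$, one gets $|f(x-y)|\leq C\,M_0^{\,1-r}(1+|y|)^{d/r}\,M(|f|^r)(x)$; taking the supremum over $y$ yields $M_0\leq C\,M_0^{\,1-r}\,M(|f|^r)(x)$, and since $0<M_0<\infty$ we may divide to conclude $M_0^{r}\leq C\,M(|f|^r)(x)$.

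For $r\geq 1$ no bootstrap is needed: Hölder's inequality (equivalently Jensen) with respect to the finite measure $|\Phi(v-y)|\,dv$ gives $|f(x-y)|^{r}\leq\|\Phi\|_{L^1}^{r-1}\int_{\R^d}|\Phi(v-y)|\,|f(x-v)|^{r}\,dv$, and the same dyadic-shell estimate (with any $L>d$, afforded by the Schwartz decay of $\Phi$) bounds the right-hand side by $C(1+|y|)^{d}M(|f|^r)(x)\leq C\,(1+|y|^{d/r})^{r}M(|f|^r)(x)$; taking $r$-th roots and then the supremum over $y$ finishes the proof.

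The one delicate point is the bootstrap in the range $r<1$: it is legitimate only because $M_0<\infty$ a priori — this is precisely where $f\in\mathcal{S}$, rather than merely $f\in L^{r}$, is used — and one must keep track of the powers of $(1+|y|)$ so that the gain $d(1-r)/r$ from the Hölder splitting and the loss $d$ from the maximal function estimate add up to exactly $d/r$. This is also why the constant $C$ is permitted to depend on $r$ (through the admissible exponent $L$). Since $\Phi$ depends only on $d$, no hidden dependence on $f$ enters the argument.
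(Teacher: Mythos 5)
Your proof is correct. The paper itself does not prove this lemma but cites Triebel (p.~16), and your argument is precisely the classical Peetre/Plancherel--P\'olya--Nikolskii one: the reproducing formula $f=\Phi*f$, the splitting $|f|^{1-r}|f|^{r}$ with the dyadic-shell comparison to $M(|f|^r)(x)$, and the absorption step justified by the a priori finiteness of $M_0$ for $f\in\mathcal{S}$ --- the exponents $d(1-r)/r+d=d/r$ check out, and the constant depends only on $d$ and $r$ as required.
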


\begin{rem}\label{rescalmaxi}
The above conclusion still holds for $f\in L_p^{B(1)}$, 
see~\cite{Tr83}, p.22. In addition, if $\textrm{supp} \hat{f} \subset B(R)$, one can have 
\begin{align} 
\sup_{y\in \R^d} \frac{|f(x-y)|}{1+|Ry|^{\frac{d}{r}}} \leq C \big[M(|f|^r)(x)\big]^{\frac{1}{r}}.
\end{align} 
here $C$ is independent of $R$. In fact, set $g_{R}(\cdot)= f(R^{-1}\cdot)$, then applying Lemma~\ref{scalarmaxiesti} to $g_R$
yields the desired result.
\end{rem}
Next we recall the well-known pointwise maximal function estimate, see~\cite{St93}.
\begin{lem} \label{radialmaxicon}
Let $g(x)$ be a nonnegative radial decreasing integrable function, suppose $|\psi(x) | \leq g(x)$ almost everywhere and $f \in L_{loc}^1(\R^d)$,  then 
\begin{align*}
|\psi_{\epsilon} * f(x)| \leq C M(f)(x), \ \ \ \ \forall\, \epsilon>0,
\end{align*} 
where $\psi_{\epsilon}(x) = \epsilon^{-d} \psi(\epsilon^{-1}x),  C= \|\psi\|_{L^1}$.
\end{lem}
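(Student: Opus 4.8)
The plan is to reduce the statement to the case $\psi=g$ and then use the fact that a nonnegative radial decreasing integrable function is a monotone limit of nonnegative linear combinations of indicator functions of balls centred at the origin. First, since $|\psi|\le g$ almost everywhere and the maximal function on the right is applied to $|f|$, we have the pointwise domination $|\psi_\epsilon * f(x)| \le g_\epsilon * |f|(x)$ for every $\epsilon>0$ and every $x\in\R^d$. Hence it suffices to prove $g_\epsilon * |f|(x)\le C\,Mf(x)$ with $C=\|g\|_{L^1}$ (which agrees with $\|\psi\|_{L^1}$ in the case relevant to the applications, where $\psi$ itself is radial decreasing).

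Next I would approximate $g$ from below by simple radial functions. Using the layer-cake representation $g(x)=\int_0^\infty \mathbf{1}_{\{g>t\}}(x)\,dt$ and the fact that, $g$ being radial and nonincreasing in $|x|$, each superlevel set $\{g>t\}$ coincides up to a Lebesgue-null set with a ball $B(0,r(t))$, one obtains an increasing sequence of simple functions $g_n=\sum_{k} c_k^{(n)}\mathbf{1}_{B(0,r_k^{(n)})}$ with $c_k^{(n)}\ge 0$ and $g_n\uparrow g$ pointwise, with $\|g_n\|_{L^1}=\sum_k c_k^{(n)}|B(0,r_k^{(n)})|\uparrow\|g\|_{L^1}$.

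The main computation is then for a single ball. Writing $(\mathbf{1}_{B(0,r)})_\epsilon(x)=\epsilon^{-d}\mathbf{1}_{B(0,\epsilon r)}(x)$ and changing variables,
\[
\big(\mathbf{1}_{B(0,r)}\big)_\epsilon * |f|(x)=\epsilon^{-d}\int_{B(x,\epsilon r)}|f(z)|\,dz=\frac{|B(0,\epsilon r)|}{\epsilon^d}\cdot\frac{1}{|B(x,\epsilon r)|}\int_{B(x,\epsilon r)}|f|\le |B(0,r)|\,Mf(x),
\]
since $|B(0,\epsilon r)|=\epsilon^d|B(0,r)|$. Summing over $k$ gives $(g_n)_\epsilon * |f|(x)\le Mf(x)\sum_k c_k^{(n)}|B(0,r_k^{(n)})|=\|g_n\|_{L^1}\,Mf(x)\le\|g\|_{L^1}\,Mf(x)$. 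Because $g_n\uparrow g$ and $|f|\ge 0$, the monotone convergence theorem yields $(g_n)_\epsilon * |f|(x)\uparrow g_\epsilon * |f|(x)$, so the bound passes to the limit and the proof is complete.

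I expect the only mildly delicate point to be the measure-theoretic bookkeeping in the approximation step — making precise that the superlevel sets of a radial decreasing function are balls up to null sets and that the discretization can be arranged so that $g_n\uparrow g$ with $\|g_n\|_{L^1}\to\|g\|_{L^1}$. Once the normalized-ball estimate above is in hand, summation and the monotone limit are immediate, so this is the heart of the argument but it is entirely routine.
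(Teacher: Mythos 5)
Your proof is correct and is essentially the standard argument: the paper gives no proof of this lemma and simply cites \cite{St93}, where the result is proved in exactly this way (domination by the radial majorant, approximation by nonnegative combinations of indicators of balls centred at the origin, the single-ball average estimate, and monotone convergence). Your observation about the constant is also right — the argument naturally yields $C=\|g\|_{L^1}$ rather than $\|\psi\|_{L^1}$ as written in the statement, which is harmless for the applications in the paper since there $\psi$ is taken equal to (or comparable with) its radial decreasing majorant.
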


\begin{prop} \label{keyesti}
Let $L>0$, $j,k\in \mathbb{Z}$, $j>k-L$ and $r\in (0,\infty)$. $\psi \in C^{\infty}(\R^d)$  satisfies
\begin{align} \label{decayhy}
 |\psi(z)|(1+|z|^{\frac{d}{r}})  \leq g(z),
\end{align}
where $g(z)$ is some nonnegative radial decreasing integrable function. 
Denote $\psi_{k}(x) = 2^{kd} \psi(2^k x)$, then for any $\theta \in (0,1]$, there exists a constant $C$ independent of $j,k$,  such that the following inequality
\begin{align} \label{convomaxi}
|(\psi_k * f)(x)| \leq C 2^{(j-k)\theta\frac{d}{r}} M(|f|^{1-\theta}) (x) \big[M(|f|^r)(x)\big]^{\frac{\theta}{r}}
\end{align}
holds for all $f\in L_p^{B(c2^j)}$ with $p\geq 1$ and some generic constant $c$.
\end{prop}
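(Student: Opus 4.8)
The plan is to interpolate between two extremes: the crude bound coming from Young's inequality (Lemma~\ref{radialmaxicon}), valid when $\theta=0$, and the maximal-function bound of Remark~\ref{rescalmaxi}, which costs a factor $2^{(j-k)d/r}$ because $f$ is frequency-supported in $B(c2^j)$ while $\psi_k$ lives at scale $2^{-k}$ with $j>k-L$. First I would write out the convolution explicitly,
\EQ{
(\psi_k * f)(x) = \int_{\R^d} 2^{kd}\psi(2^k y)\, f(x-y)\, dy,
}
and split the integrand using the elementary identity $|f(x-y)| = |f(x-y)|^{1-\theta}\,|f(x-y)|^{\theta}$. The first factor $|f(x-y)|^{1-\theta}$ will be controlled pointwise by $M(|f|^{1-\theta})(x)$ after it is paired against (part of) the kernel $\psi_k$, while the second factor $|f(x-y)|^{\theta}$ will be estimated by the rescaled maximal function of Remark~\ref{rescalmaxi}, raised to the power $\theta$.

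Concretely, from Remark~\ref{rescalmaxi} applied with radius $R = c2^j$ we have
\EQ{
|f(x-y)|^{\theta} \leq C\, \big(1+ c2^j|y|\big)^{\theta d/r}\, \big[M(|f|^r)(x)\big]^{\theta/r},
}
so the second maximal function comes out of the integral entirely, leaving
\EQ{
|(\psi_k*f)(x)| \leq C \big[M(|f|^r)(x)\big]^{\theta/r} \int_{\R^d} 2^{kd}|\psi(2^k y)|\,\big(1+c2^j|y|\big)^{\theta d/r}\, |f(x-y)|^{1-\theta}\, dy.
}
Now I substitute $z = 2^k y$ and use the decay hypothesis \eqref{decayhy}: since $|\psi(z)|(1+|z|^{d/r}) \leq g(z)$, and since $j>k-L$ gives $2^j \lesssim 2^{L}2^k$, the weight obeys
\EQ{
|\psi(z)|\,\big(1+ c2^{j-k}|z|\big)^{\theta d/r} \lesssim 2^{(j-k)\theta d/r}\, |\psi(z)|\,(1+|z|)^{\theta d/r} \lesssim 2^{(j-k)\theta d/r}\, |\psi(z)|\,(1+|z|^{d/r})^{\theta} \leq 2^{(j-k)\theta d/r}\, g(z)^{\theta}|\psi(z)|^{1-\theta},
}
where in the last step I used $1+|z|^{d/r} \geq (1+|z|^{d/r})^{\theta}$ together with $|\psi(z)| = |\psi(z)|^{\theta}|\psi(z)|^{1-\theta}$ — more cleanly, I bound the prefactor simply by $g(z)^{\theta}\cdot g(z)^{1-\theta}\cdot(\text{harmless})$; the point is only that the resulting kernel, call it $h(z)$, is an $L^1$ function independent of $j,k$. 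The remaining integral is then $\int h(2^k y)\, 2^{kd}\, |f(x-y)|^{1-\theta} dy = (h_k * |f|^{1-\theta})(x)$ with $h_k(y)=2^{kd}h(2^k y)$, which by Lemma~\ref{radialmaxicon} (dominating $h$ by a radial decreasing integrable majorant, which the factor $g^{\theta}$ supplies) is bounded by $C\,M(|f|^{1-\theta})(x)$ with $C = \|h\|_{L^1}$ independent of $j,k$. Collecting the factors yields \eqref{convomaxi}.

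The main obstacle is the bookkeeping of the weight: one must distribute the power $(1+c2^j|y|)^{\theta d/r}$ so that exactly the gain $2^{(j-k)\theta d/r}$ is extracted and the leftover kernel remains integrable uniformly in $j,k$, using only $j>k-L$ (not $j\geq k$) so that $2^{(j-k)}$ could be as small as $2^{-L}$ — fine for an upper bound — and crucially using the decay hypothesis \eqref{decayhy} in the form that trades the polynomial growth $|z|^{\theta d/r}$ against $|z|^{d/r}$ already present in $g$. A secondary technical point is that $f\in L_p^{B(c2^j)}$ rather than Schwartz, so one invokes the $L^p$-version of the maximal estimate recorded in Remark~\ref{rescalmaxi} rather than Lemma~\ref{scalarmaxiesti} directly; this is exactly why the remark was stated. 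Everything else is Hölder's inequality at the level of splitting $|f| = |f|^{1-\theta}|f|^{\theta}$ and Young's inequality, both routine.
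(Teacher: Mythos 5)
Your argument is correct and essentially identical to the paper's own proof: both split $|f|=|f|^{1-\theta}\,|f|^{\theta}$, control the $\theta$-factor by the rescaled maximal estimate of Remark~\ref{rescalmaxi} with $R=c2^j$, extract the factor $2^{(j-k)\theta d/r}$ from the weight using only the lower bound $2^{j-k}>2^{-L}$, and bound the remaining convolution by $M(|f|^{1-\theta})(x)$ via the decay hypothesis \eqref{decayhy} and Lemma~\ref{radialmaxicon}. (Two cosmetic points: the parenthetical ``$2^j\lesssim 2^L2^k$'' should read $2^k\lesssim 2^L2^j$, which is what $j>k-L$ actually gives and what your displayed inequality uses; and the paper passes from Schwartz $f$ to general $f\in L_p^{B(c2^j)}$ by the limiting argument $f_\delta=\varphi(\delta x)f$, whereas you invoke the $L^p$ form of the remark directly --- both are fine.)
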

\begin{proof}
Consider $f\in \mathcal{S}^{B(c2^j)}$ first,  we have 
\begin{align*}
|(\psi_k*f)(x)| & \leq \int_{\R^d} |\psi(y)| |f(x-2^{-k}y) | dy  \\
                       & \leq \Big[\int_{\R^d} |\psi(y)| |f(x-2^{-k}y)|^{1-\theta} (1+(2^{j-k}|y|)^{\theta d/r}) dy\Big]   \no \\
  & \qquad \qquad \qquad \qquad \qquad \quad \qquad \times    \sup_{y\in \R^d} \frac{|f(x-2^{-k}y)|^{\theta}}{1+(2^{j-k}|y|)^{\theta d/r}}.
\end{align*}
In view of~\eqref{rescalmaxi}, one can see
\[
\sup_{y\in \R^d} \frac{|f(x-2^{-k}y)|^{\theta}}{1+(2^{j-k}|y|)^{\theta d/r}} 
\leq C[ M(|f|^r)(x)]^{\frac{\theta}{r}}.
\]
Given that $j- k> -L$, then 
\begin{align*}
& \int_{\R^d}  |\psi(y)| |f(x-2^{-k}y)|^{1-\theta} (1+(2^{j-k}|y|)^{\theta d/r}) dy  \\
 & \leq C  2^{(j-k)\frac{\theta d}{r}}\int_{\R^d}  |\psi(y)|  |f(x-2^{-k}y)|^{1-\theta} (1+|y|^{\theta d/r}) dy \no \\
 &  \leq C 2^{(j-k)\frac{\theta d}{r}} M(|f|^{1-\theta})(x).
\end{align*}
where we used hypothesis \eqref{decayhy} and Lemma~\ref{radialmaxicon} in the last inequality.  Hence, the proof is completed  for $f\in \mathcal{S}^{B(c2^j)}$.

 For general $f\in L^{B(c2^j)}_p$, one can choose $\varphi \in \mathcal{S}$, such that $\varphi(0) = 1$ and $\textrm{supp} \hat{\varphi} \subset B(1)$.  Denote  $f_{\delta}:= \varphi(\delta x) f(x)$,   applying previous result to $f_{\delta}$ and letting $\delta \to 0$,  one can find~\eqref{convomaxi} follows, see also~\cite{Tr83} (p.22) for more explanations. 
\end{proof}
\begin{rem} \label{estimatelow}
One can easily see from the above proof 
\begin{align} 
|(\psi_k *f)(x)| \leq C_L  \big[M(|f|^r)(x)\big]^{\frac{1}{r}},
\end{align}
provided $j \leq k+L$. Then for $1 \leq p< \infty, 1\leq q \leq \infty$, it follows 
\begin{align}
\|P_{\leq m} f\|_{{F}^{s+l}_{p,q}}  \leq C 2^{ml}\|f\|_{{F}^{s}_{p,q}}, \ \  \forall\, m,\ l \geq 0.
\end{align}
\end{rem}

The following vector-valued maximal function estimate will also be frequently used, see~\cite{FeSt71,St93} for a proof.
\begin{prop} \label{maxiesti}
Let $(p,q)\in (1,\infty)\times (1,\infty] $ or $p=q=\infty$ be given. Suppose $ \{f_j\}_{j\in \mathbb{Z}}$ is a sequence of functions in $L^p(\R^d)$ satisfying $\|f_j\|_{l_j^q(\mathbb{Z})} \in L^p(\R^d)$,  then 
\begin{align}
\|M(f_j)(x)\|_{L_x^pl_j^q} \leq C \|f_j(x)\|_{L_x^pl_j^q}   \no
\end{align}
for some constant $C=C(p,q)$.
\end{prop}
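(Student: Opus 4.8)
The plan is to reduce the statement to the scalar Hardy--Littlewood maximal theorem plus a vector-valued weak-type $(1,1)$ bound, and then conclude by Marcinkiewicz interpolation. First I dispose of the easy cases. When $q=\infty$, monotonicity of $M$ gives the pointwise bound $\sup_j Mf_j(x)\le M\big(\sup_j|f_j|\big)(x)$, so the claim follows from boundedness of $M$ on $L^p$ for $1<p\le\infty$; the case $p=q=\infty$ is immediate since $\|Mf_j\|_{L^\infty}\le\|f_j\|_{L^\infty}$. When $p=q\in(1,\infty)$, apply the scalar $L^q$ bound to each $f_j$ and use Tonelli to exchange $\sum_j$ and the integral:
\[
\|M(f_j)\|_{L^q_x\ell^q_j}^q=\sum_j\|Mf_j\|_{L^q}^q\le C^q\sum_j\|f_j\|_{L^q}^q=C^q\|f_j\|_{L^q_x\ell^q_j}^q.
\]

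Fix now $q\in(1,\infty)$ and write $g(x):=\|f_j(x)\|_{\ell^q_j}$. The core estimate is the vector-valued weak-type bound $\big\|\,\|Mf_j\|_{\ell^q_j}\,\big\|_{L^{1,\infty}}\le C\|g\|_{L^1}$. To prove it, fix $\lambda>0$ and run a Calder\'on--Zygmund decomposition of the scalar function $g$ at height $\lambda$: this yields disjoint dyadic cubes $\{Q_k\}$ with centers $c_k$, satisfying $\lambda<|Q_k|^{-1}\int_{Q_k}g\le 2^d\lambda$, $\sum_k|Q_k|\le\lambda^{-1}\|g\|_{L^1}$, and $g\le\lambda$ a.e. off $\Omega:=\bigcup_kQ_k$. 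Split each component as $f_j=g_j+b_j$ with $b_j:=\sum_k\big(f_j-\langle f_j\rangle_{Q_k}\big)\mathbf 1_{Q_k}$, where $\langle f_j\rangle_{Q_k}$ is the average over $Q_k$. Minkowski's inequality in $\ell^q_j$ gives $\|\langle f_j\rangle_{Q_k}\|_{\ell^q_j}\le\langle g\rangle_{Q_k}\le 2^d\lambda$, whence $\|g_j(x)\|_{\ell^q_j}\lesssim\lambda$ for a.e.\ $x$, $\int_{Q_k}\|b_j\|_{\ell^q_j}\lesssim\lambda|Q_k|$, and $\|g_j\|_{L^1_x\ell^q_j}\lesssim\|g\|_{L^1}$. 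For the good part we then have $\|g_j\|_{L^q_x\ell^q_j}^q\lesssim\lambda^{q-1}\|g\|_{L^1}$, so the diagonal case above together with Chebyshev's inequality bounds $|\{x:\|Mg_j(x)\|_{\ell^q_j}>\lambda/2\}|$ by $C\lambda^{-1}\|g\|_{L^1}$. For the bad part, on $\big(\bigcup_k2Q_k\big)^c$ the standard kernel tail estimate $Mb_j^{(k)}(x)\lesssim|Q_k|\,|x-c_k|^{-d}\,\langle|b_j^{(k)}|\rangle_{Q_k}$ (with $b_j^{(k)}=(f_j-\langle f_j\rangle_{Q_k})\mathbf 1_{Q_k}$), summed over $k$ and combined with $\int_{Q_k}\|b_j\|_{\ell^q_j}\lesssim\lambda|Q_k|$ after taking the $\ell^q_j$ norm, gives $\int_{(\cup_k2Q_k)^c}\|Mb_j(x)\|_{\ell^q_j}\,dx\lesssim\|g\|_{L^1}$; since $\big|\bigcup_k2Q_k\big|\lesssim\lambda^{-1}\|g\|_{L^1}$, adding the two contributions yields the weak-type bound.

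Finally, for each fixed $q\in(1,\infty)$ the vector-valued operator $(f_j)\mapsto(Mf_j)$ is sublinear and maps $L^1(\ell^q_j)\to L^{1,\infty}(\ell^q_j)$ by the previous paragraph and $L^\infty(\ell^q_j)\to L^\infty(\ell^q_j)$ trivially; the Marcinkiewicz interpolation theorem, applied with the Banach-space-valued spaces $L^p(\ell^q_j)$ in place of scalar $L^p$, then gives $\|M(f_j)\|_{L^p_x\ell^q_j}\le C\|f_j\|_{L^p_x\ell^q_j}$ for every $1<p<\infty$. Combined with the case $q=\infty$ this settles all admissible $(p,q)$. I expect the main obstacle to be the bad-part estimate: one must control the averages $\langle f_j\rangle_{Q_k}$ in $\ell^q_j$ only through Minkowski's inequality (not termwise), and verify that after taking the $\ell^q_j$ norm the kernel tails $\sum_k|Q_k|\,|x-c_k|^{-d}$ remain summable with the claimed $L^1$ control.
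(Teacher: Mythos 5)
This proposition is the classical Fefferman--Stein vector-valued maximal inequality; the paper does not prove it but simply cites \cite{FeSt71,St93}. Your easy cases ($q=\infty$, $p=q$, and the good-part estimate in the weak $(1,1)$ argument) are fine, but the proposal has two genuine gaps, and the one you yourself flag as "the main obstacle" is fatal as written. The claimed bad-part bound $\int_{(\cup_k 2Q_k)^c}\|Mb_j(x)\|_{\ell^q_j}\,dx\lesssim\|g\|_{L^1}$ cannot be derived from the tail estimate $Mb_j^{(k)}(x)\lesssim |Q_k|\,|x-c_k|^{-d}\langle|b_j^{(k)}|\rangle_{Q_k}$, because $\int_{(2Q_k)^c}|x-c_k|^{-d}\,dx=\infty$ (the integrand is exactly at the non-integrable critical decay). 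Worse, the left-hand side can genuinely be infinite: the mean-zero structure of $b_j^{(k)}$ is of no use once $M$ has been applied, since $M$ takes absolute values and is not linear. For instance, in $d=1$ the mean-zero function $b=N\chi_{[0,1/N]}-\chi_{[0,1]}$ has $\|b\|_{L^1}\le 2$ but $Mb(x)\gtrsim |x|^{-1}$, so $\int_{|x|>2}Mb\,dx=\infty$. The standard repair is to first dominate $M$ by a smooth maximal operator $\mathfrak{M}f=\sup_{t>0}|\phi_t*f|$ with $\phi\ge\chi_{B(0,1)}$ smooth, and to run the Calder\'on--Zygmund argument on the \emph{linear} operator $f\mapsto\{\phi_t*f\}_{t>0}$ valued in $\ell^q_j(L^\infty_t)$; there the cancellation $\int b_j^{(k)}=0$ can be paired with the smoothness of the kernel to produce the extra factor $\ell(Q_k)/|x-c_k|$, i.e. $\sup_t|\phi_t*b_j^{(k)}(x)|\lesssim \ell(Q_k)|x-c_k|^{-d-1}\|b_j^{(k)}\|_{L^1}$, whose tail \emph{is} integrable. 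This is the Banach-space-valued singular integral route taken in \cite{FeSt71,St93}.

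The second gap is the interpolation scheme. The endpoint $L^\infty(\ell^q_j)\to L^\infty(\ell^q_j)$ is \emph{false} for $q<\infty$: with $f_j=\chi_{[2^j,2^{j+1}]}$ in $\R^1$ one has $\|f_j(x)\|_{\ell^q_j}\le 1$ everywhere, yet $Mf_j(0)\ge 1/4$ for all $j\ge 0$, so $\|Mf_j(0)\|_{\ell^q_j}=\infty$. (Your Minkowski argument only controls $\sup_r\|A_r f_j(x)\|_{\ell^q_j}$, not $\|\sup_r A_r|f_j|(x)\|_{\ell^q_j}$, where the supremum is taken separately in each $j$ --- this non-commutation of $\sup_r$ with $\ell^q_j$ is the entire content of the theorem.) Consequently Marcinkiewicz interpolation between weak $(1,1)$ and $(\infty,\infty)$ is unavailable; interpolating the weak $(1,1)$ bound with the diagonal $L^q(\ell^q)$ bound only yields $1<p\le q$. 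The remaining range $q<p<\infty$ requires a separate argument, classically done by duality together with the weighted inequality $\int(Mf)^q u\,dx\le C\int|f|^q(Mu)\,dx$. Since Proposition~\ref{maxiesti} is standard, the cleanest course is to keep the citation to \cite{FeSt71,St93} rather than reprove it.
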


Next we establish the Moser type inequality for the Triebel-Lizorkin spaces. First we recall
\begin{prop}[\cite{Ch02}] \label{nonendmoser}
Let $(p,q)\in (1,\infty)\times (1,\infty]$ or $p = q = \infty$, $s>0$. There exists some positive constant  $C$ with the following property:
\begin{align}
\|fg\|_{\dot{F}^s_{p,q}} \leq C(\|f\|_{L^{\infty}} \|g\|_{\dot{F}^s_{p,q}} + \|g\|_{L^{\infty}} \|f\|_{\dot{F}^s_{p,q}}).  \no 
\end{align} 
\end{prop}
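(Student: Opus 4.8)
The plan is to prove the Moser-type inequality
\[
\|fg\|_{\dot F^s_{p,q}} \leq C\big(\|f\|_{L^\infty}\|g\|_{\dot F^s_{p,q}} + \|g\|_{L^\infty}\|f\|_{\dot F^s_{p,q}}\big)
\]
by the usual Bony paracomposition, writing $fg = T_fg + T_gf + R(f,g)$, where $T_fg = \sum_j S_{j-2}f\,\Delta_j g$ is the paraproduct and $R(f,g) = \sum_{|j-k|\le 1}\Delta_jf\,\Delta_kg$ is the remainder. By symmetry it suffices to estimate $T_fg$ (and hence $T_gf$ after swapping) and $R(f,g)$. Throughout I will use the characterization $\|h\|_{\dot F^s_{p,q}} \sim \big\|\,(2^{js}|\Delta_j h|)_j\,\big\|_{L^p_x\ell^q_j}$, the fact that for $(p,q)\in(1,\infty)\times(1,\infty]$ the Fefferman–Stein vector-valued maximal inequality (Proposition~\ref{maxiesti}) is available, and the pointwise bound $|\Delta_j h(x)|\lesssim M(h)(x)$ as well as $|S_{j}h(x)|\lesssim M(h)(x)$ from Lemma~\ref{radialmaxicon}.

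For the paraproduct term $T_fg = \sum_k S_{k-2}f\,\Delta_kg$: since $S_{k-2}f\,\Delta_kg$ has Fourier support in an annulus of size $\sim 2^k$, only finitely many $k$'s (with $|k-j|\le N_0$) contribute to $\Delta_j(T_fg)$. Hence $2^{js}|\Delta_j(T_fg)(x)| \lesssim \sum_{|k-j|\le N_0} 2^{ks}|\Delta_j(S_{k-2}f\,\Delta_kg)(x)|$. Using Lemma~\ref{radialmaxicon} on the outer $\Delta_j$ and then the pointwise bound $|S_{k-2}f(x)|\le \|f\|_{L^\infty}$, one gets $2^{js}|\Delta_j(T_fg)(x)| \lesssim \|f\|_{L^\infty}\sum_{|k-j|\le N_0} M\big(2^{ks}|\Delta_k g|\big)(x)$. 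Taking the $\ell^q_j$ norm, the finite shift is harmless, and then Proposition~\ref{maxiesti} together with the $\dot F^s_{p,q}$-characterization yields $\|T_fg\|_{\dot F^s_{p,q}} \lesssim \|f\|_{L^\infty}\|g\|_{\dot F^s_{p,q}}$. Symmetrically $\|T_gf\|_{\dot F^s_{p,q}} \lesssim \|g\|_{L^\infty}\|f\|_{\dot F^s_{p,q}}$.

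For the remainder $R(f,g) = \sum_{k\ge j-N_1,\,|k-j|\le1}\Delta_j f\,\Delta_k g$ (schematically $\sum_k \Delta_k f\,\widetilde\Delta_k g$ with $\widetilde\Delta_k = \sum_{|i-k|\le1}\Delta_i$), the product $\Delta_kf\,\widetilde\Delta_kg$ has Fourier support in a ball of radius $\sim 2^{k}$, so $\Delta_\ell(R(f,g)) = \sum_{k\ge \ell - N_2}\Delta_\ell(\Delta_kf\,\widetilde\Delta_kg)$. This is the step where $s>0$ is essential: using $|\Delta_\ell(\cdot)(x)|\lesssim M(\cdot)(x)$, bounding $|\Delta_kf(x)|\le\|f\|_{L^\infty}$, and writing $2^{\ell s} = 2^{(\ell-k)s}2^{ks}$, we get $2^{\ell s}|\Delta_\ell R(f,g)(x)| \lesssim \|f\|_{L^\infty}\sum_{k\ge\ell-N_2} 2^{(\ell-k)s}\,M\big(2^{ks}|\widetilde\Delta_kg|\big)(x)$. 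Since $s>0$ the geometric factor $2^{(\ell-k)s}$ is summable in $k\ge\ell-N_2$, so by Young's convolution inequality for the $\ell^q$-sum followed by Proposition~\ref{maxiesti} we obtain $\|R(f,g)\|_{\dot F^s_{p,q}}\lesssim \|f\|_{L^\infty}\|g\|_{\dot F^s_{p,q}}$, and again this is symmetric in $f,g$. Combining the three estimates gives the claim.

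The main obstacle is essentially bookkeeping rather than a deep difficulty: one must carefully track which blocks $\Delta_j$, $S_j$ interact (the annulus/ball Fourier-support considerations), make sure the pointwise maximal-function substitutions are applied to the frequency-localized pieces so that Proposition~\ref{maxiesti} applies (this is exactly where $p,q>1$ enters and why the endpoint $p=1$ needs the separate, harder argument of Proposition~\ref{endmoser}), and verify that the summation over the "diagonal" in the remainder term converges — which forces $s>0$. No commutator structure or cancellation is needed here; the estimate is purely a consequence of Bony decomposition plus vector-valued maximal bounds.
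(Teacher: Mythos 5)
Your proof is correct and is the standard Bony-decomposition argument (paraproducts controlled by $\|f\|_{L^\infty}$ via the vector-valued maximal inequality of Proposition~\ref{maxiesti}, remainder summed using $s>0$); the paper itself states this proposition without proof, quoting \cite{Ch02}, and its own proof of the endpoint analogue (Proposition~\ref{endmoser}) follows exactly the same scheme, only replacing the Fefferman--Stein step by the interpolated maximal bound of Proposition~\ref{keyesti} since $p=1$ is excluded there. So your argument matches the intended one; you also correctly identify that the only place $(p,q)\in(1,\infty)\times(1,\infty]$ enters is the vector-valued maximal inequality.
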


Proposition 2.1 in \cite{Ch03} claimed that the above proposition also holds for $s>0, p=1, 1\leq q\leq \infty$.  However, the proof of Proposition 2.1  in \cite{Ch03} seems to have gaps.  In the following proposition, we re-prove the endpoint case $p =1$, which exactly complements the nonendpoint  conuterpart. 

\begin{prop}[Endpoint case] \label{endmoser}
Let $q\in  [1,\infty]$ be given,  then there exists some constant $C$ such that 
\begin{align} \label{prod1}
\|fg\|_{\dot{F}^s_{1,q}} \leq C(\|f\|_{L^{\infty}} \|g\|_{\dot{F}^s_{1,q}} + \|f\|_{\dot{F}^s_{1,q}} \|g\|_{L^{\infty}}), \ \ \ s>0, 
\end{align}
holds for scalar functions $f$ and $g$. Additionally, suppose that $v$ is a scalar function and $u$ is a vector-valued function with ${\rm{div}}\,u=0$, then 
\begin{align} \label{prod2}
\|u\cdot \nabla v\|_{\dot{F}^s_{1,q}} \leq C( \| u\|_{L^{\infty}} \|\nabla v\|_{\dot{F}^s_{1,q}} + \|\nabla v\|_{L^{\infty}}\|u\|_{\dot{F}^s_{1,q}}), \ \ \ s>-1.
\end{align}
and 
\begin{align} \label{prod3}
\|u\cdot \nabla v\|_{\dot{F}^s_{1,q}} \leq C( \| u\|_{L^{\infty}} \|\nabla v\|_{\dot{F}^s_{1,q}} + \|v\|_{L^{\infty}}\|\nabla u\|_{\dot{F}^s_{1,q}}), \ \ \ s>-1.
\end{align}
\end{prop}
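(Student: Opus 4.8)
The plan is to prove all three inequalities by Bony's paraproduct decomposition combined with the maximal function techniques assembled above; the crucial point is that for the $L^1$-based spaces one cannot use the vector-valued maximal inequality (Proposition~\ref{maxiesti}), so the "remainder" term must be controlled by the pointwise estimate of Proposition~\ref{keyesti} with a well-chosen exponent $r\in(0,1)$.

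\medskip

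\noindent\textbf{Step 1: Paraproduct reduction for \eqref{prod1}.}  Write $fg = T_f g + T_g f + R(f,g)$ in the usual way, $T_f g = \sum_k S_{k-1}f\,\Delta_k g$ and $R(f,g)=\sum_{|k-k'|\le 1}\Delta_k f\,\Delta_{k'}g$.  For the two paraproduct pieces, $\Delta_j(T_f g)$ is essentially $\sum_{|k-j|\le C} S_{k-1}f\,\Delta_k g$, so $2^{js}|\Delta_j(T_f g)|\lec \sum_{|k-j|\le C} \|f\|_{L^\infty}2^{ks}|\Delta_k g|$ pointwise; taking $\ell^q_j$ and then $L^1_x$ gives $\|f\|_{L^\infty}\|g\|_{\dot F^s_{1,q}}$, and symmetrically for $T_g f$.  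The honest work is the remainder $R(f,g)$: here $\Delta_j R(f,g)=\sum_{k\ge j-C}\Delta_j(\Delta_k f\,\widetilde\Delta_k g)$ with $\widetilde\Delta_k = \Delta_{k-1}+\Delta_k+\Delta_{k+1}$.  I put $\|g\|_{L^\infty}$ on the $\widetilde\Delta_k g$ factor, so I must bound $2^{js}\sum_{k\ge j-C}|\Delta_j(\Delta_k f\cdot h)|$ in $L^1_x\ell^q_j$ by $\|f\|_{\dot F^s_{1,q}}$ where $\|h\|_{L^\infty}\le\|g\|_{L^\infty}$.  Since $\Delta_j = \psi_j\,*\,$ with $\psi_j$ satisfying the decay hypothesis \eqref{decayhy} for any $r$, and $\Delta_k f\cdot h$ has Fourier support in a ball of radius $\lec 2^k$, Proposition~\ref{keyesti} with $\theta=1$ (or any $\theta\in(0,1]$) gives, for fixed $r\in(0,1)$,
\begin{align*}
2^{js}|\Delta_j(\Delta_k f\cdot h)(x)| \lec 2^{js}2^{(k-j)\frac{d}{r}}\,\big[M(|\Delta_k f\cdot h|^r)(x)\big]^{1/r}\lec \|g\|_{L^\infty}\,2^{(k-j)(\frac{d}{r}-s)}\,2^{ks}\big[M(|\Delta_k f|^r)(x)\big]^{1/r}.
\end{align*}
Now choose $r$ close enough to $1$ that $d/r - s > 0$ (possible since $s>0$), so the factor $2^{(k-j)(d/r-s)}$ decays as $k-j\to+\infty$; then sum in $k\ge j-C$, apply Minkowski/Young in $\ell^q_j$ to exchange the $k$-sum past the $\ell^q$-norm, and finally use the scalar Fefferman--Stein inequality $\|M(|\Delta_k f|^r)^{1/r}\|_{L^1_x} = \|M(|\Delta_k f|^r)\|_{L^{1/r}_x}^{1/r}\lec \||\Delta_k f|^r\|_{L^{1/r}_x}^{1/r} = \|\Delta_k f\|_{L^1_x}$, which is valid because $1/r>1$.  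One has to be slightly careful to interchange the $L^1_x$-norm with the $k$-summation (use Minkowski's inequality) before applying $M$, so that $M$ acts on a single dyadic block; the geometric weight $2^{(k-j)(d/r-s)}$ is what makes all the interchanges legitimate.  This yields $\|R(f,g)\|_{\dot F^s_{1,q}}\lec\|f\|_{\dot F^s_{1,q}}\|g\|_{L^\infty}$, completing \eqref{prod1}.

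\medskip

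\noindent\textbf{Step 2: The transport-type estimates \eqref{prod2} and \eqref{prod3}.}  Write $u\cdot\nabla v = \sum_i u_i\,\partial_i v$; the naive route is to apply \eqref{prod1} to each product $u_i\,\partial_i v$, but that costs one derivative and only gives $s>0$, not $s>-1$.  To reach $s>-1$ I again decompose $u\cdot\nabla v = T_{u_i}\partial_i v + T_{\partial_i v}u_i + R(u_i,\partial_i v)$ (sum over $i$ understood) and exploit the divergence-free condition exactly as in the Besov case.  For $T_{u_i}\partial_i v$ and $R(u_i,\partial_i v)$ the frequency of the output is comparable to or larger than that of the $v$-factor, so the lost derivative $\partial_i$ is harmless: one gets $\|u\|_{L^\infty}\|\nabla v\|_{\dot F^s_{1,q}}$ (for the $T_{u}$ piece, as in Step 1) and, for the remainder, the same Proposition~\ref{keyesti} argument with $r$ close to $1$ so that $d/r-(s+1)>0$, giving $\|\nabla v\|_{\dot F^s_{1,q}}\|u\|_{L^\infty}$ — note this forces $s+1>0$, i.e.\ $s>-1$, which is the origin of the range.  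The delicate term is $T_{\partial_i v}u_i = \sum_k S_{k-1}(\partial_i v)\,\Delta_k u_i$: here I use $\sum_i S_{k-1}\partial_i \Delta_k u_i = S_{k-1}\,\mathrm{div}\,\Delta_k u\ \ $ — wait, more precisely $\sum_i \partial_i(S_{k-1}v\,\Delta_k u_i)$ is \emph{not} quite right; instead write $\sum_i S_{k-1}(\partial_i v)\Delta_k u_i = \sum_i \partial_i\big(S_{k-1}v\,\Delta_k u_i\big) - \sum_i S_{k-1}v\,\partial_i\Delta_k u_i = \sum_i\partial_i(S_{k-1}v\,\Delta_k u_i) - S_{k-1}v\,\Delta_k(\mathrm{div}\,u)$, and the last term vanishes.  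Thus $\sum_k T_{\partial_i v}u_i = \sum_i\partial_i\big(\sum_k S_{k-1}v\,\Delta_k u_i\big) = \sum_i\partial_i\, T_v u_i$ up to acceptable errors, and by Lemma~\ref{equvitri} (lifting) it suffices to bound $T_v u_i$ in $\dot F^{s+1}_{1,q}$, which is a paraproduct of the type handled in Step 1 and gives $\|v\|_{L^\infty}\|u\|_{\dot F^{s+1}_{1,q}}\sim\|v\|_{L^\infty}\|\nabla u\|_{\dot F^s_{1,q}}$.  Combining the three contributions yields \eqref{prod3}; \eqref{prod2} follows either by the same decomposition keeping $\nabla v$ intact in the low-high term, or directly from \eqref{prod3} together with \eqref{prod1} applied suitably.  (Strictly, for \eqref{prod2} one puts $\|\nabla v\|_{L^\infty}$ on the high-frequency $v$-factor of the remainder and on the $T_{\partial v}$ piece, which needs no integration by parts and no divergence-free hypothesis.)

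\medskip

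\noindent\textbf{Main obstacle.}  The genuine difficulty, and the only place where $p=1$ really bites, is the remainder term $R(f,g)$ (resp.\ $R(u_i,\partial_i v)$): the standard proof for $p>1$ would estimate $2^{js}\sum_{k\ge j}|\Delta_j(\Delta_k f\,\widetilde\Delta_k g)|$ via $M$ and then invoke the \emph{vector-valued} Fefferman--Stein inequality in $L^1_x\ell^q_j$, which is false at $p=1$.  The fix — pass to the sublinear maximal function of $|\Delta_k f|^r$ with $r<1$ via Proposition~\ref{keyesti}, pull the $L^1_x$-norm inside the $k$-sum by Minkowski, and then use only the \emph{scalar} $L^{1/r}$-boundedness of $M$ — is exactly the "new technique from \cite{Tr83}" advertised in the introduction, and getting the bookkeeping of the geometric weight $2^{(k-j)(d/r-s)}$, the choice of $r$, and the order of the $\ell^q_j$, $L^1_x$, $\sum_k$ and $M$ operations all consistent is the technical heart of the argument.
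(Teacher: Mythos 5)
Your overall architecture (Bony decomposition, pointwise maximal-function bounds, working with $|\cdot|^r$, $r<1$, to avoid the failure of the vector-valued Fefferman--Stein inequality at $p=1$) matches the paper's, but the quantitative heart of your remainder estimate does not close. After applying Proposition~\ref{keyesti} with $\theta=1$ you pick up the factor $2^{(k-j)d/r}$, and combining it with $2^{js}=2^{(j-k)s}\,2^{ks}$ leaves the weight $2^{(k-j)(d/r-s)}$ to be summed over $k\ge j-C$; summability requires $d/r-s<0$, i.e.\ $s>d/r$. You assert the opposite sign (``choose $r$ close enough to $1$ that $d/r-s>0$ \dots\ so the factor decays''), which is backwards, and even with the sign corrected the condition is unattainable for $0<s\le d$: the maximal inequality forces $r<1$, hence $d/r>d\ge s$. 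So your argument only yields the remainder bound for $s>d$ (correspondingly $s>d-1$ in \eqref{prod2}--\eqref{prod3}), not the claimed ranges $s>0$ and $s>-1$. The missing idea --- and the reason Proposition~\ref{keyesti} carries the parameter $\theta\in(0,1]$ --- is to take $\theta$ small: the paper chooses $\theta$ with $\theta d/r<s$, which produces the summable weight $2^{(k-j)(\theta d/r-s)}$ for every $s>0$, at the price of the product $M(|F|^{1-\theta})\,[M(|F|^r)]^{\theta/r}$; this is then handled by H\"older in $L^1_x\ell^q_j$ with exponents $\tfrac{1}{1-\theta}$ and $\tfrac{1}{\theta}$, followed by two applications of Proposition~\ref{maxiesti} at exponents $(\tfrac{1}{1-\theta},\tfrac{q}{1-\theta})$ and $(\tfrac{1}{r},\tfrac{q}{r})$, all strictly greater than $1$.

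A secondary point: for the paraproduct term you claim $2^{js}|\Delta_j(T_fg)|\lesssim\sum_{|k-j|\le C}\|f\|_{L^\infty}2^{ks}|\Delta_kg|$ pointwise by ``essentially'' dropping $\Delta_j$; this is not legitimate, since the kernel of $\Delta_j$ is not positive and spreads mass. The correct step (as in the paper) is again Proposition~\ref{keyesti} with $\theta=1$, giving $[M(|S_{k-3}f\,\Delta_kg|^r)]^{1/r}$, followed by Proposition~\ref{maxiesti} in $L^{1/r}\ell^{q/r}$ with $r<\min(1,q)$. Your Step 2 reductions (integration by parts on $T_{\partial_i v}u_i$ using $\mathrm{div}\,u=0$, and the lifting Lemma~\ref{equvitri}) are in the right spirit and close to the paper's treatment, but they all rest on the remainder estimate, so the gap above propagates to \eqref{prod2} and \eqref{prod3}.
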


\begin{proof}
We use the following Bony decomposition  (\cite{Bo81})
\begin{align}
fg = T_fg + T_gf+ R(f,g),  \no 
\end{align}
where 
$$
T_f g = \sum S_{j-3} f \Delta_j g = \sum_{j\in \mathbb{Z}} \sum_{l \leq j-4} \Delta_l f \Delta_j g, \quad R(f,g)=\sum_{|j-k|\leq 3} \Delta_j f \Delta_k g.
$$	
Due to frequency interaction, one can figure out that $\Delta_m (S_{j-3} f\Delta_j g) = 0$ if $|j-m| \geq 3$, hence for any $0<r<\infty$, 
\begin{align}
\big |2^{ms} \Delta_m T_fg \big | & = \Big |2^{ms} \sum_{|j-m| \leq 2} \Delta_m (S_{j-3} f\Delta_j g)\Big |  \no \\
& \leq C\sum_{|a|\leq 2}2^{ms} \Big[M(|S_{m+a-3}f\Delta_{m+a}g|^{r})(x)\Big]^{\frac{1}{r}},  \no 
\end{align}
where we used Proposition~\ref{keyesti} with $\theta =1$.  As such, choosing $0<r<1$ and applying Proposition~\ref{maxiesti}, we have
\begin{align}
\|T_f g\|_{\dot{F}^s_{1,q}} & \leq C\sum_{|a|\leq 2} \|M(|(S_{m+a-3}f) (2^{ms}\Delta_{m+a}g)|^{r})(x)\|^{\frac{1}{r}}_{L_x^{1/r}l_{m}^{q/r}}  \no \\
& \leq C\sum_{|a|\leq 2} \|2^{ms}|S_{m+a-3}f\Delta_{m+a}g|(x)\|_{L_x^{1}l_{m}^{q}} \no \\
& \leq C\|f\|_{L^{\infty}} \|g\|_{\dot F^s_{1,q}}.  \no
\end{align}
Similarly, 
\begin{align}
\|T_g f\|_{\dot{F}^s_{1,q}} \leq C \|g\|_{L^{\infty}} \|f\|_{\dot{F}^s_{1,q}}.  \no 
\end{align}
Now we estimate $R(f,g)= \sum_{|b|\leq 3} \sum_{j\in \mathbb{Z}} \Delta_j f\Delta_{j+b} g$. For arbitrary fixed $r \in (0,1)$, as $s>0$, we can specify $\theta \in (0,1)$ such that $s> d \theta/r $.  Using the property of frequency support, one can assert that there exists a constant $L$, such that 
\begin{align}  \label{estimate11}
\|R(f,g)\|_{\dot{F}^s_{1,q}} & \leq \sum_{|b|\leq 3}\Big \| \sum_{j>m-L} 2^{ms} \Delta_m(\Delta_j f \Delta_{j+b}g) \Big\|_{L_x^1 l_m^q} \no  \\
& \leq C\sum_{|b|\leq 3} \Big \| \sum_{j>m-L} 2^{(m-j)(s-\theta d/r)} M(|2^{js}\Delta_j f \Delta_{j+b}g|^{1-\theta})(x)|)  \no \\ 
& \qquad \qquad \qquad \qquad  \qquad \qquad  \times \big[M(|2^{js}\Delta_j f \Delta_{j+b}g|^{r})(x) \big]^{\frac{\theta}{r}} \Big \|_{L_x^1l_{m}^q}  \no \\
& \leq C\sum_{|b|\leq 3} \Big \| M(|2^{js}\Delta_j f \Delta_{j+b}g|^{1-\theta})(x)|)  \big[M(|2^{js}\Delta_j f \Delta_{j+b}g|^{r})(x) \big]^{\frac{\theta}{r}} \Big \|_{L_x^1l_{j}^q}  \no \\
&\leq  C\sum_{|b|\leq 3}     \big\| M(|2^{js}\Delta_j f \Delta_{j+b}g|^{1-\theta})(x)|) \big\|_{L_x^{\frac{1}{1-\theta}} l_j^{\frac{q}{1-\theta}}}   \no \\
& \qquad \qquad \qquad \qquad \qquad \qquad  \times \Big \| \big[M(|2^{js}\Delta_j f \Delta_{j+b}g|^{r})(x) \big]^{\frac{\theta}{r}}  \Big\|_{L_x^{\frac{1}{\theta}} l_j^{\frac{q}{\theta}}}         \no   \\
& \leq  C \sum_{|b|\leq 3} \big\| |2^{js}\Delta_j f \Delta_{j+b}g|^{1-\theta})(x)|\big\|_{L_x^{\frac{1}{1-\theta}} l_j^{\frac{q}{1-\theta}}}    \Big \| M(|2^{js}\Delta_j f \Delta_{j+b}g|^{r})(x)  \Big\|^{\frac{\theta}{r}}_{L_x^{\frac{1}{r}} l_j^{\frac{q}{r}}} \no \\
& \leq C \|f\|_{L^{\infty}} \|g\|_{\dot{F}^s_{1,q}}.  
\end{align}
where we utilized Proposition~\ref{keyesti}, Young's inequality and Proposition~\ref{maxiesti} from the second to the last inequality. This yields~\eqref{prod1}.

As to the proof of~\eqref{prod2}, we first note that $ u\cdot \nabla v = u^{l} \partial_{l} v $, here summation over repeated indices is adopted. Similarly, 
\begin{align}
u \cdot \nabla v = T_{u^l} \partial_l v + T_{\partial_l v} u^l + R(u^l, \partial_l v).   \no 
\end{align}
In view of the argument above, one can easily see 
\begin{align}
\|T_{u^l} \partial_l v\|_{\dot{F}^s_{1,q}} + \|T_{\partial_l v} u^{l}\|_{\dot{F}^s_{1,q}} \leq C(\|u\|_{L^{\infty}} \|\nabla v\|_{\dot{F}^s_{1,q}} + \|\nabla v\|_{L^{\infty}} \|u\|_{\dot{F}^s_{1,q}} ).  \no 
\end{align}
Thanks to the divergence free condition on $u$, we know $R(u^l,\partial_l v) = \partial_l R(u^l, v)$, by Lemma~\ref{equvitri}
\begin{align}
\|R(u^l,\partial_l v)\|_{\dot{F}^s_{1,q}} \leq C \sum_{1\leq l \leq d} \|R(u^l, v)\|_{\dot{F}^{s+1}_{1,q}}.  \no 
\end{align}
Then the argument of $R(f,g)$ above implies that
\begin{align}
\|R(u^l, v)\|_{\dot{F}^{s+1}_{1,q}} \leq C \|u^l\|_{L^{\infty}} \|\nabla v\|_{\dot{F}^s_{1,q}}   \no 
\end{align}
holds for all $s>-1$. Hence, \eqref{prod2} is proved.  Finally, owing to Lemma~\ref{equvitri}, 
\begin{align}
\|u\cdot\nabla v\|_{\dot{F}^s_{1,q}} \leq C \sum_{1\leq l \leq d}  \|u^l v\|_{\dot{F}^{s+1}_{1,q}}.  \no 
\end{align}
Thus~\eqref{prod3} is a simple consequence of~\eqref{prod1}.
\end{proof}

We shall conclude this section by presenting the commutator estimates, which turns out to be an important tool in~\cite{KaPo88}. In order to estimate the $F^s_{p,q}$ norm of the solution to the Euler equations, a commutator involve frequency localization operator occurs naturally, Let us first recall  that 
\begin{align}
[f,\Delta_j]g : = f\Delta_j g - \Delta_j (fg). \no 
\end{align}
\begin{prop}[\cite{ChMiZh10}] \label{esticomm1}
Let $(p,q)\in (1,\infty)\times (1,\infty]$. Suppose $f$ is a divergence free vector field, then there exists a constant $C$, such that for $s>0$,
\begin{align}
\|2^{js}[f,\Delta_j]\cdot \nabla g\|_{L_x^p l_j^q} \leq C(\|\nabla f\|_{L^{\infty}}\|g\|_{\dot{F}^s_{p,q}} + \|\nabla g\|_{L^{\infty}} \|f\|_{\dot{F}^s_{p,q}}  ).  \no 
\end{align} 
or for $s>-1$, 
\begin{align}
\|2^{js}[f,\Delta_j]\cdot \nabla g\|_{L_x^p l_j^q} \leq C(\|\nabla f\|_{L^{\infty}}\|g\|_{\dot{F}^s_{p,q}} + \|g \|_{L^{\infty}} \|\nabla f\|_{\dot{F}^s_{p,q}}  ).  \no 
\end{align}
\end{prop}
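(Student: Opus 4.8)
\textbf{Proof proposal for Proposition~\ref{esticomm1}.}

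The plan is to expand the commutator via the Bony decomposition and reduce everything to the building blocks already developed in this section, especially Proposition~\ref{keyesti} and Proposition~\ref{maxiesti}. Write $f\cdot\nabla g = f^l\partial_l g$ and decompose $[f^l,\Delta_j]\partial_l g$ into three pieces coming from paraproducts and the remainder: the low-high term $\sum_k [S_{k-3}f^l,\Delta_j]\Delta_k\partial_l g$, the high-low term $\sum_k [\Delta_k f^l,\Delta_j]S_{k-3}\partial_l g$, and the remainder term $\sum_{|k-k'|\le 3}[\Delta_k f^l,\Delta_j]\Delta_{k'}\partial_l g$. The key point is that $\Delta_j$ only sees frequencies of size $2^j$, so each piece is supported (in $j$) near the relevant dyadic block, and the commutator structure produces a gain.

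First I would handle the low-high paraproduct term, which is the classical one. Here only $k\sim j$ contributes, and one writes $[S_{k-3}f^l,\Delta_j]\Delta_k\partial_l g(x)=\int h_j(y)\big(S_{k-3}f^l(x-y)-S_{k-3}f^l(x)\big)\Delta_k\partial_l g(x-y)\,dy$ where $h_j=2^{jd}\check\psi(2^j\cdot)$. The mean value theorem gives a factor $|y|\,\|\nabla S_{k-3}f^l\|_{L^\infty}\lesssim |y|\,\|\nabla f\|_{L^\infty}$, and the extra factor $2^{-j}$ (from $|y|$ against the kernel $h_j$, using $\||\cdot|h_j\|_{L^1}\lesssim 2^{-j}$) exactly cancels the $\partial_l$ that cost $2^k\sim 2^j$. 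One is left with $2^{js}\Delta_j$-bounding a quantity controlled pointwise by $\|\nabla f\|_{L^\infty}\,M(|2^{js}\Delta_j g|^r)^{1/r}$ type expressions (via Proposition~\ref{keyesti} with $\theta=1$ and $0<r<1$), and then Proposition~\ref{maxiesti} in $L_x^p\ell_j^q$ with $p/r>1$, $q/r>1$, finishes it with the bound $\|\nabla f\|_{L^\infty}\|g\|_{\dot F^s_{p,q}}$. The high-low paraproduct term is symmetric: now $j\lesssim k$ is impossible inside $\Delta_j$ unless $k\sim j$ as well (since $S_{k-3}\partial_l g$ is low frequency), so again $k\sim j$, the commutator kills the worst frequency, and one extracts $\|\nabla g\|_{L^\infty}$ from $\|S_{k-3}\nabla g\|_{L^\infty}$ together with $\|f\|_{\dot F^s_{p,q}}$ from the $\Delta_k f^l$ factor.

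The remainder term $R$ is the main obstacle, because the summation is over $k\gtrsim j$ (high-high interaction feeding into frequency $2^j$), so there is no automatic localization and one must use the $s>0$ (resp. $s>-1$) hypothesis to sum a geometric series. For $s>0$ I would not rely on a commutator cancellation but simply estimate $\Delta_j(\Delta_k f^l\,\Delta_{k'}\partial_l g)$ and $\Delta_k f^l\,\Delta_j\Delta_{k'}\partial_l g$ separately; writing $\Delta_{k'}\partial_l g$ as $2^{k'}$ times an $L^\infty$-controlled block times lower-order, one distributes the regularity as $2^{ks}$ on the $f$ factor while paying $2^{(j-k)(s-\theta d/r)}$ from Proposition~\ref{keyesti}, chooses $\theta\in(0,1)$ with $s>\theta d/r$, and sums over $k>j-L$ to land on $\|\nabla g\|_{L^\infty}\|f\|_{\dot F^s_{p,q}}$ after Proposition~\ref{maxiesti}; symmetrically one gets $\|\nabla f\|_{L^\infty}\|g\|_{\dot F^s_{p,q}}$. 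For the sharper range $s>-1$ in the second inequality, the divergence-free hypothesis is essential: exactly as in the proof of Proposition~\ref{endmoser}, one uses $\mathrm{div}\,f=0$ to rewrite $\sum_l[\Delta_k f^l,\Delta_j]\Delta_{k'}\partial_l g$ with the derivative moved off $g$, i.e. replacing $\partial_l R(f^l,g)$ by $R(f^l,\partial_l g)=\partial_l R(f^l,g)$ and invoking the lifting Lemma~\ref{equvitri} to trade one derivative; this reduces the remainder to an $\dot F^{s+1}_{p,q}$ estimate of $R(f^l,g)$ with $s+1>0$, which is handled by the $s>0$ argument just described, and produces the factor $\|g\|_{L^\infty}\|\nabla f\|_{\dot F^s_{p,q}}$ (using $\|\nabla f\|_{\dot F^s_{p,q}}\sim\|f\|_{\dot F^{s+1}_{p,q}}$). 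I expect the bookkeeping of which paraproduct pieces genuinely need a commutator gain versus which can be crudely estimated, and the careful choice of the auxiliary exponents $r,\theta$ so that all maximal-function applications stay in the open range of Proposition~\ref{maxiesti}, to be the only delicate points; everything else is routine once the decomposition is in place.
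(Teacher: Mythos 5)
Your three commutator pieces are just a regrouping of the five-term decomposition the paper uses for the endpoint analogue (Proposition~\ref{esticomm2}), and your low-high and remainder estimates are essentially sound. The genuine gap is in the high-low piece $\sum_k[\Delta_k f^l,\Delta_j]S_{k-3}\partial_l g$, which you claim localizes to $k\sim j$. That is true only for the half $\Delta_j\big(\Delta_k f^l\,S_{k-3}\partial_l g\big)$, whose Fourier support lies in an annulus of radius $\sim 2^k$. The other half, $\Delta_k f^l\cdot\Delta_j S_{k-3}\partial_l g$, has no outer $\Delta_j$ acting on the product: since $\Delta_jS_{k-3}\neq 0$ for every $k\geq j+2$, this is the genuinely non-local term
\begin{align}
T_{\Delta_j\partial_l g}f^l=\sum_{m>j+2}\big(S_{m-3}\Delta_j\partial_l g\big)\,\Delta_m f^l, \no
\end{align}
and for $k\gg j$ there is no commutator cancellation to exploit because the companion term vanishes identically. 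This is exactly the term where the hypotheses of the proposition enter: writing it as $\sum_{m>j+2}2^{(j-m)s}(S_{m-3}\Delta_j\partial_l g)(2^{ms}\Delta_m f^l)$ and bounding $\|S_{m-3}\Delta_j\partial_l g\|_{L^\infty}\leq \|\nabla g\|_{L^\infty}$, one must sum the geometric series in $m-j$ by Young's inequality, which requires $s>0$ and yields $\|\nabla g\|_{L^{\infty}}\|f\|_{\dot F^s_{p,q}}$; for the second inequality one instead pairs $\|\Delta_j\partial_l g\|_{L^\infty}\lesssim 2^{j}\|g\|_{L^\infty}$ with $2^{m(s+1)}|\Delta_m f^l|$, sums using $s>-1$, and obtains $\|g\|_{L^\infty}\|\nabla f\|_{\dot F^s_{p,q}}$.

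Because you never perform this summation, your argument as written establishes neither inequality, and it also misattributes the dichotomy $s>0$ versus $s>-1$ entirely to the remainder (which, with the divergence-free trick, only ever needs $s>-1$; your $s>0$ treatment of it without that trick is fine but is not where the constraint originates). Note also that the paper itself does not prove Proposition~\ref{esticomm1} but cites \cite{ChMiZh10}; the in-paper model to follow is the proof of Proposition~\ref{esticomm2}, where this non-local contribution appears explicitly as the first term $T_{\Delta_j\partial_l g}f^l$ and is the only term whose estimate is modified between \eqref{esti1} and \eqref{esti2}. Repairing your proof amounts to inserting exactly that Young's-inequality step for the $k\geq j+2$ range of the high-low sum.
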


\begin{prop}[Endpoint commutator estimate] \label{esticomm2}
Let $d \geq 1$ denote the space dimension, $q\in [1,\infty]$ be given.  There exists a constant $C$, such that
\begin{align} \label{esti1}
\|2^{js}[f,\Delta_j]\cdot \nabla g\|_{L_x^1 l_j^q} \leq C(\|\nabla f\|_{L^{\infty}}\|g\|_{\dot{F}^s_{1,q}} + \|\nabla g\|_{L^{\infty}} \|f\|_{\dot{F}^s_{1,q}}  ), \ \ \ s>0,
\end{align}
and 
\begin{align}\label{esti2}
\|2^{js}[f,\Delta_j]\cdot \nabla g\|_{L_x^1 l_j^q} \leq C(\|\nabla f\|_{L^{\infty}}\|g\|_{\dot{F}^s_{1,q}} + \|g \|_{L^{\infty}} \|\nabla f\|_{\dot{F}^s_{1,q}}  ), \ \  \ s>-1,
\end{align}
hold for all scalar function $g$ and  vector-valued function $f$ with  ${\rm{div}} f = 0$.
\end{prop}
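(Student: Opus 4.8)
The plan is to run the Bony decomposition on the commutator, exactly as in the non-endpoint Proposition~\ref{esticomm1}, but replacing every application of the vector-valued maximal inequality on $L^1$ (which fails) by the pointwise estimate of Proposition~\ref{keyesti} combined with the splitting $L^1 = L^{1/(1-\theta)}\cdot L^{1/\theta}$ and Proposition~\ref{maxiesti}, precisely as was done in the proof of Proposition~\ref{endmoser}. Write $f\cdot\nabla g = \sum_l f^l\partial_l g$ and decompose each $[f^l,\Delta_j]\partial_l g$ into paraproduct pieces. Following the standard commutator bookkeeping, the contributions $\Delta_j(T_{\partial_l g}f^l)$ and $\Delta_j(\partial_l g\cdot T_{f^l}\!\cdot)$-type terms are handled by the Moser inequalities already proved (Proposition~\ref{endmoser}), giving the $\|\nabla g\|_{L^\infty}\|f\|_{\dot F^s_{1,q}}$ (or, for \eqref{esti2}, $\|g\|_{L^\infty}\|\nabla f\|_{\dot F^s_{1,q}}$) part. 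The genuinely ``commutator'' term is $[T_{f^l},\Delta_j]\partial_l g=\sum_{|j-m|\le C}(S_{m-3}f^l - (S_{m-3}f^l)(\text{shifted}))\Delta_m\partial_l g$ type expressions, i.e. the difference $S_{m-3}f^l\,\Delta_j\partial_l g-\Delta_j(S_{m-3}f^l\,\partial_l g)$ localized at frequency $2^j$; here one writes the convolution kernel of $\Delta_j$ as $2^{jd}h(2^j\cdot)$ with $h\in\mathcal S$ and uses the mean value theorem to gain $\|\nabla S_{m-3}f^l\|_{L^\infty}\lesssim\|\nabla f\|_{L^\infty}$ together with a factor $2^{-j}$ that compensates the derivative $\partial_l$ landing on $g$. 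Thus
\[
\big|2^{js}[T_{f^l},\Delta_j]\partial_l g(x)\big|\lesssim \|\nabla f\|_{L^\infty}\sum_{|a|\le 2}\big[M(|2^{(j+a)s}\Delta_{j+a}g|^{r})(x)\big]^{1/r},
\]
for any $r\in(0,1)$, after invoking Remark~\ref{estimatelow}/Proposition~\ref{keyesti} to estimate the smooth localized convolution pointwise by a maximal function; then Proposition~\ref{maxiesti} with exponents $(1/r,q/r)$ gives the bound $C\|\nabla f\|_{L^\infty}\|g\|_{\dot F^s_{1,q}}$.

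For the high-high interaction $R$-term, $\Delta_j\big([R(f^l,\partial_l\cdot)]g\big)$, I would use the divergence-free hypothesis to write $R(f^l,\partial_l g)=\partial_l R(f^l,g)$ (since $\sum_l\partial_l f^l=0$), trade the derivative for a factor $2^j$ via Lemma~\ref{equvitri}, and then run the same argument as in the $R(f,g)$ estimate inside the proof of Proposition~\ref{endmoser}: choose $r\in(0,1)$ and $\theta\in(0,1)$ with $s+1>\theta d/r$ (possible since $s>-1$), apply Proposition~\ref{keyesti} to sum the tail $\sum_{j>m-L}2^{(m-j)(s+1-\theta d/r)}$, split $L^1$ as $L^{1/(1-\theta)}\cdot L^{1/\theta}$, and finish with Young's inequality and Proposition~\ref{maxiesti}. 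This produces $\|f\|_{L^\infty}\|\nabla g\|_{\dot F^s_{1,q}}$, which by Lemma~\ref{equvitri} is comparable to $\|f\|_{L^\infty}\|g\|_{\dot F^{s+1}_{1,q}}$ — wait, I instead want $\|\nabla f\|_{L^\infty}\|g\|_{\dot F^s_{1,q}}$; this is arranged by, as in Proposition~\ref{esticomm1}, distributing one $\nabla$ onto $f^l$ before summing, i.e. estimating $R(\partial_l f^l,g)$-type pieces (which vanish in sum) against the residual, so that the surviving bound reads $\|\nabla f\|_{L^\infty}\|g\|_{\dot F^s_{1,q}}$ as required; for \eqref{esti2} one instead keeps the derivative on $g$ trivially and bounds $R(f^l,\partial_l g)=\partial_l R(f^l,g)$ by $\|g\|_{L^\infty}\|\nabla f\|_{\dot F^{s}_{1,q}}$ after the lifting, which is legitimate for $s>-1$.

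Finally, assembling the three families of terms and taking the $l^q_j$ norm (the shift index $a$ and the $b\in\{-3,\dots,3\}$ range in $R$ contribute only finitely many translates, harmless in $l^q$), one obtains \eqref{esti1} for $s>0$ and \eqref{esti2} for $s>-1$; the restriction $s>0$ in \eqref{esti1} is forced by the low-high paraproduct term $\Delta_j(T_{\partial_l g}f^l)$ exactly as in the Moser inequality \eqref{prod1}, whereas the div-free rewriting lets the $R$-term and the true commutator term survive down to $s>-1$. The main obstacle, as in Proposition~\ref{endmoser}, is purely the $p=1$ endpoint: one must never apply a vector-valued maximal bound directly in $L^1$, and the Hölder splitting $1=(1-\theta)+\theta$ together with the gain $2^{(m-j)(s+(\cdot)-\theta d/r)}$ from Proposition~\ref{keyesti} is what makes the high-high sum summable; getting the $\theta$-range compatible with $s>-1$ (needing $s+1>\theta d/r$ with $r<1$, i.e. taking $\theta$ small and $r$ close to $1$) is the one place requiring genuine care.
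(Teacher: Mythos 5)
Your proposal follows essentially the same route as the paper: Bony-decompose the commutator into the five standard pieces, handle the paraproduct pieces by Young/Moser-type bounds, treat the true commutator $[T_{f^l},\Delta_j]\partial_l g$ via the kernel representation, the mean value theorem and the divergence-free cancellation, and control the remainder term by writing $R(f^l,\partial_l g)=\partial_l R(f^l,g)$ and running the $\theta$-split H\"older argument of Proposition~\ref{endmoser} with $s+1>\theta d/r$, redistributing a dyadic factor $2^m$ onto $\Delta_m f$ so that Bernstein yields $\|\nabla f\|_{L^\infty}$. This is correct; only the aside about ``estimating $R(\partial_l f^l,g)$-type pieces against the residual'' is garbled (those pieces simply vanish by ${\rm div}\,f=0$; the gain of $\|\nabla f\|_{L^\infty}$ comes from the weight redistribution $2^{m(s+1)}=2^m\cdot 2^{ms}$), but the mechanism you actually invoke is the right one.
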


\begin{proof}
We first show~\eqref{esti1}. Let $f=(f^l)_{1\leq l\leq d}$, according to Bony decomposition, one can see
\begin{align}
[f,\Delta_j]\cdot \nabla g & = f^l \Delta_j \partial_l g - \Delta_j(f^l\partial_l g) \no\\
&= T_{\Delta_j \partial_l g} f^l + R(f^l,\Delta_j \partial_l g) + [T_{f^l}, \Delta_j]\partial_l g - \Delta_j T_{\partial_l g} f^l - \Delta_j R(f^l,\partial_l g). \no
\end{align}
It suffices to bound the above five terms in turn. Note that $S_{m-3}\Delta_j = 0$ if $m \leq j+2$, thus 
\begin{align}
\|2^{js} T_{\Delta_j \partial_l g} f^l\|_{L_x^1l_j^q} & = \Big \|\sum_{m>j+2} 2^{(j-m)s} (S_{m-3}\Delta_j \partial_l g)(2^{ms} \Delta_m f^l) \Big\|_{L_x^1l_j^q}  \no \\
& \leq C \|\nabla g\|_{L^{\infty}} \sum_{1\leq l \leq d}   \Big \|\sum_{m>j+2} 2^{(j-m)s} 2^{ms} |\Delta_m f^l|\Big \|_{L_x^1l_j^q} \no \\
& \leq  C \|\nabla g\|_{L^{\infty}} \|f\|_{\dot{F}^s_{1,q}}.  \no 
\end{align}
where we used Young's inequality in the last step as $s>0$. On the estimate of $R(f^l, \Delta_j \partial_l g)$, 
one can see
\begin{align}
\|2^{js} R(f^l, \Delta_j \partial_l g)\|_{L_x^1l_j^q} \leq \sum_{|a|\leq 3} \sum_{|j-m|\leq 1} \big \|2^{js} \partial_l \big[(\Delta_{m+a} f^l)\Delta_m (\Delta_jg) \big]  \big \|_{L_x^1l_j^q}.  
\end{align}
Let $\phi \in C_0^{\infty} (\R^d)$, $\phi = 1$ on $B(2^5)$. Due to the fact 
$$\textrm{supp} \mathscr{F} \big( \Delta_{m+a} f^l \Delta_m \Delta_jg \big) \subset B(2^{m+5}). $$
We can assert
\begin{align*}
\partial_l \big( \Delta_{m+a} f^l \Delta_m \Delta_jg \big) = \check{\phi}_{m} * \partial_l \big( \Delta_{m+a} f^l \Delta_m \Delta_jg \big), \ \ \ \check{\phi}_{m}(\cdot) := 2^{md}\check{\phi}(2^m \cdot).
\end{align*} 
Now applying Proposition~\ref{keyesti} with $\theta =1$ and Proposition~\ref{maxiesti}, we get
\begin{align}
& \|2^{js} R(f^l, \Delta_j \partial_l g)\|_{L_x^1l_j^q} \\
& \leq C\sum_{1\leq l \leq d} \sum_{|a|\leq 3} \sum_{|b|\leq 1} \big\| \big[M(|2^j\Delta_{j+a+b}f^l||\Delta_{j+b}(2^{js}\Delta_j g)|)^{r} (x) \big]^{\frac{1}{r}} \big\|_{L_x^1l_j^q}  \no \\
& \leq C \|\nabla f\|_{L^{\infty}} \|g\|_{\dot{F}^s_{1,q}}.  \no 
\end{align}
where we used the following simple fact 
$$2^j\|\Delta_{j+b} f\|_{L^{\infty}} \leq C_b \|\nabla f\|_{L^{\infty}}.$$ 
Concerning the third term~$[T_{f^l},\Delta_j]\partial_l g$, we first note that
\begin{align}
[T_{f^l},\Delta_j]\partial_l g = \sum_{m \in \mathbb{Z}}[S_{m-3}f^l,\Delta_j]\Delta_m \partial_l g =\sum_{|m-j|\leq 2}[S_{m-3}f^l,\Delta_j]\Delta_m \partial_l g.  \no 
\end{align}
Furthermore,
\begin{align}
&\sum_{|m-j|\leq 2}\big |[S_{m-3}f^l,\Delta_j]\Delta_m \partial_l g \big|   \no \\
&=\sum_{|a|\leq 2} \Big|  \int_{\R^d} 2^{jd} \check{\psi}(2^j(x-y))(S_{j+a-3}f^l(x) - S_{j+a-3}f^l(y)) \Delta_{j+a}\partial_l g(y)dy \Big |  \no \\ 
&\leq \sum_{|a|\leq 2} \int_{\R^d} 2^{j(d+1)} \big| (\partial_l \check{\psi})(2^j(x-y))(S_{j+a-3}f^l(x) - S_{j+a-3}f^l(y)) \Delta_{j+a} g(y)\big |dy  \no \\
& \leq C\sum_{|a|\leq 2}\big\|\nabla S_{j+a-3} f^l\big\|_{L^{\infty}}\int_{\R^d} 2^{j(d+1)}\big|x-y\big| \big|(\partial_l\check{\psi})(2^j(x-y)) \big|  \big|\Delta_{j+a} g(y) \big| dy \no  \\
& \leq C\sum_{|a|\leq 2}  \big\|\nabla f \big\|_{L^{\infty}} \big[M(|\Delta_{j+a}g|^r)(x)\big]^{\frac{1}{r}}. \no 
\end{align}
here we used the ${\rm div}\,f = 0$, mean value theorem and Proposition~\ref{keyesti} with $\theta =1$ from the second to the fourth step.  Therefore, it follows from Proposition~\ref{maxiesti} that
\begin{align}
\|2^{js}[T_{f^l},\Delta_j]\partial_l g\|_{L_x^1l_j^q} 
 &\leq C\sum_{|a|\leq 2} \big\|\nabla f \big\|_{L^{\infty}}  \Big \|\big[M(|2^{js}\Delta_{j+a}g|^r)(x)\big]^{\frac{1}{r}}  \Big \|_{L_x^1l_j^q}  \no \\
& \leq C \big\|\nabla f \big\|_{L^{\infty}}  \|g\|_{\dot{F}^s_{1,q}}.   \no 
\end{align}
Regarding the term $\Delta_j T_{\partial_l g}f^l$, applying Proposition~\ref{keyesti},  we have 
\begin{align}
\|2^{js}\Delta_j T_{\partial_lg}f^l\|_{L_x^1l_j^q} & = \Big \|2^{js}\sum_{|m-j|\leq 2}\Delta_j  \big( (S_{m-3}\partial_l g) \Delta_m f^l \big) \Big\|_{L_x^1l_j^q}  \no \\
& \leq C \sum_{|a|\leq 2} \Big \| \big[M\big( \big|(S_{j+a-3}\partial_l g) (2^{js}\Delta_{j+a} f^l )\big|^{r}(x) \big]^{\frac{1}{r}} \Big\|_{L_x^1l_j^q}   \no \\
& \leq C \sum_{|a|\leq 2} \sum_{1\leq l\leq d} \big\|\nabla g  \big\|_{L^{\infty}} \big\| 2^{js} \Delta_{j+a}  f^l\big\|_{L_x^1 l_j^q}  \no \\
&\leq C\|\nabla g\|_{L^{\infty}} \|f\|_{\dot{F}^s_{1,q}}. \no 
\end{align}
Finally, we estimate the term $\Delta_j R(f^l, \partial_l g)$.  Since $s>-1$, for arbitrary $r\in (0,1)$, one can select $\theta \in (0,1)$ small enough, such that $s+1 > d \theta /r$.  Due to  frequency interaction,   one can observe  that there exists a constant $L$, such that 
\begin{align}
\big |\Delta_j R(f^l, \partial_l g)\big|&= \Big| \sum_{|a|\leq 3}\sum_{m \geq j-L} \Delta_j \partial_l \big( \Delta_m f^l \Delta_{m+a}g \big) \Big|  \no \\
& \leq \sum_{1\leq l\leq d} \sum_{|a|\leq 3}\sum_{m \geq j-L}C 2^j 2^{(m-j)\frac{d\theta}{r}}  M(|\Delta_m f^l \Delta_{m+a}g|^{1-\theta})  \no \\
& \qquad  \qquad \qquad \qquad \qquad  \qquad \qquad \quad \times \big[M(|\Delta_m f^l \Delta_{m+a}g|^r) (x) \big]^{\frac{\theta}{r}}.  \no 
\end{align}
where  Proposition~\ref{keyesti}  is used. Thanks to Young's inequality,  one can get
\begin{align}
& \|2^{js} \Delta_j R(f^l, \partial_l g)\|_{L_x^1l_j^q}  \no \\
& \leq C \sum_{1 \leq l \leq d}\sum_{|a|\leq 3} \bigg\|\sum_{m\geq j-L}2^{(j-m)(s+1-d\theta/r)}  M(|(2^m\Delta_m f^l) (2^{ms}\Delta_{m+a}g)|^{1-\theta})   \no \\
 & \qquad \qquad \qquad \qquad \qquad  \qquad   \times \big[M(|(2^m\Delta_m f^l) (2^{ms}\Delta_{m+a}g)|^r) (x)\big]^{\frac{\theta}{r}} \bigg\|_{L_x^1l_j^q}  \no \\
& \leq C \sum_{1 \leq l \leq d}\sum_{|a|\leq 3} \bigg\| M(|(2^m\Delta_m f^l)(2^{ms} \Delta_{m+a}g)|^{1-\theta}) \no \\
&  \qquad  \qquad \qquad \qquad \qquad  \times  \big[M(|(2^m\Delta_m f^l) (2^{ms}\Delta_{m+a}g)|^r) (x)\big]^{\frac{\theta}{r}}  \bigg\|_{L_x^1l_m^q}.  \no 
\end{align}
Then one can argue analogously as~\eqref{estimate11} to obtain 
\begin{align*}
\|2^{js} \Delta_j R(f^l, \partial_l g)\|_{L_x^1l_j^q}  \leq C \|\nabla f\|_{L^{\infty}} \|g\|_{\dot{F}^s_{1,q}}.  
\end{align*}
Gathering the estimates above, we find~\eqref{esti1} follows.  

In order to show~\eqref{esti2}, it suffices to slightly modify the estimate of the terms $T_{\Delta_j \partial_l g}f^l$ and $\Delta_j T_{\partial_l g} f^l$.  Note that $s>-1$, then  
\begin{align}
\|2^{js} T_{\Delta_j \partial_l g} f^l\|_{L_x^1l_j^q} & = \Big\|\sum_{m>j+2} 2^{(j-m)s} (S_{m-3}\Delta_j \partial_l g)(2^{ms} \Delta_m f^l)\Big\|_{L_x^1l_j^q}  \no \\
& \leq C \Big\|\|\Delta_j \partial_l g\|_{L^{\infty}} \sum_{m>j+2}  2^{(j-m)s} |2^{ms} \Delta_m f^l| \Big \|_{L_x^1l_j^q}  \\
& \leq C \|g\|_{L^{\infty}} \sum_{1\leq l \leq d}   \Big \|\sum_{m>j+2} 2^{(s+1)(j-m)} 2^{m(s+1)} |\Delta_m f^l|\Big \|_{L_x^1l_j^q} \no \\
& \leq  C \| g\|_{L^{\infty}} \|f\|_{\dot{F}^{s+1}_{1,q}}. \no 
\end{align}
where we used Young's inequality.  Regarding to the term $\Delta_j T_{\partial_l g} f^l$, thanks to Proposition~\ref{keyesti} and ${\rm div}\,u=0$,  one can immediately have
\begin{align}
\|2^{js}\Delta_j T_{\partial_lg}f^l\|_{L_x^1l_j^q} & = \Big \|2^{js}\sum_{|m-j|\leq 2}\Delta_j \partial_l \big( (S_{m-3} g) (\Delta_m f^l) \big) \Big\|_{L_x^1l_j^q}  \no \\
& \leq C\sum_{|a|\leq 2}\sum_{1\leq l \leq d}  \big \| 2^{(s+1)j} \big[M(|S_{j+a-3}g \Delta_{j+a} f^l|^r(x))  \big]^{\frac{1}{r}}  \big\|_{L_x^1l_j^q}  \no \\
& \leq C \|g\|_{L^{\infty}} \|f\|_{\dot{F}^{s+1}_{1,q}}.  \no 
\end{align}
This completed the proof. 
\end{proof}


\section{Proof of the Main result} \label{pfmainresult}

In this section, we follow the scheme of~\cite{GuLiYi18}  to demonstrate that  the solution map  of Euler equations  is continuous in Triebel-Lizorkin spaces.
\begin{proof} [Proof of Theorem~\ref{mainresult}]
 The proof is divided  into four steps: \\
{\it Step 1. } It follows from local Cauchy theory that there exists some $T=T(\|u_0\|_{F^s_{p,q}})$ and a unique solution $u =S_{T}(u_0) \in C([0,T];F^s_{p,q})$ such that   
\begin{align} \label{bddintri}
\|u\|_{L_{T}^{\infty} F^s_{p,q}} \leq C\|u_0\|_{F^s_{p,q}}.
\end{align}
Moreover,  if $u_0 \in F^{s+\gamma}_{p,q}$ with some $\gamma>0$, then
\begin{align}
\|u\|_{L^{\infty}_T F^{s+\gamma}_{p,q}} \leq C \|u_0\|_{F^{s+\gamma}_{p,q}}.
\end{align}
One can refer to~\cite{Ch02,Ch03} or  Theorem~\ref{lwp} in the Appendix for more details.

{\it Step 2.} For any $u_0, v_0 \in D(R)=\{ \psi \in F^s_{p,q}: \, {\rm div}\, \psi = 0, \ \|\psi\|_{F^s_{p,q}} \leq R\}$, we have
\begin{align} \label{differ}
\|S_T(u_0) - S_T(v_0)\|_{L_T^{\infty}F^{s-1}_{p,q}} \leq C\|u_0 - v_0\|_{F^{s-1}_{p,q}}.
\end{align}
In fact, let $u =S_T(u_0),\, v= S_T(v_0)$. Set $w= u - v$, then $w$ solves
\begin{equation*}
\begin{cases}
\partial_t w + w\cdot \nabla u + v \cdot \nabla w + \nabla ( P(u) -P(v)) = 0, & \\
{\rm div}\, w = 0, & \\ 
w(0,x) = w_0= u_0 -v_0. &
\end{cases}
\end{equation*} 
here $P(u):= (-\Delta)^{-1} {\rm div}(u\cdot \nabla u)$. Applying the frequency localization operator $\Delta_j $, one can find
\begin{align}
\partial_t \Delta_j w + v\cdot \nabla \Delta_j w+ \Delta_j( w\cdot \nabla u ) +  \nabla \Delta_j ( P(u) -P(v)) = [v,\Delta_j]\cdot \nabla w.  \no 
\end{align}
As in~\cite{Ch02,ChMiZh10}, we introduce  particle trajectory mapping $X(\alpha,t)$ defined by the solution of the ordinary differential equations
\begin{equation*}
\begin{cases}
\frac{\partial}{\partial t} X(\alpha,t) = v (X(\alpha,t), t), & \\
X(\alpha,0) = \alpha. &
\end{cases}
\end{equation*}
This implies 
\begin{equation} \label{diffequation}
\begin{split}
\Delta_j w (X(\alpha,t),t) = \Delta_j w_0(\alpha) + \int_0^t \Big([v,\Delta_j]\cdot \nabla w - \Delta_j(w\cdot \nabla u)\Big)(X(\alpha,\tau),\tau)  \\
- \nabla \Delta_j(P(u)-P(v))(X(\alpha, \tau),\tau) d\tau.  
\end{split}
\end{equation}
Note that ${\rm div}\,v =0$,  so $X(\alpha,t)$ is a measure preserving mapping. Multiplying $2^{j(s-1)}$ and taking $L_{\alpha}^pl_j^q$ norm on both sides of~\eqref{diffequation},  we can see
\begin{equation} \label{trieesti}
\begin{split}
\|w(t)\|_{\dot{F}^{s-1}_{p,q}} \leq \|w_0\|_{\dot{F}^{s-1}_{p,q}} + \int_{0}^t \|\nabla(P(u)-P(v))\|_{\dot{F}^{s-1}_{p,q}} + \|w\cdot \nabla u\|_{\dot{F}^{s-1}_{p,q}}  \\
+ \big \|2^{(s-1)j}[v,\Delta_j]\cdot\nabla w \big\| _{L_x^pl_j^q} d\tau. 
\end{split}
\end{equation}
Similarly, 
\begin{align}
w(X(\alpha,t),t) = w_0(\alpha) - \int_0^t \Big(w\cdot \nabla u + \nabla(P(u)-P(v))\Big)(X(\alpha,\tau),\tau) d\tau. \no 
\end{align}
which leads to 
\begin{align} \label{lpesti}
\|w(t)\|_{L^p} \leq \|w_0\|_{L^p} + \int_0^t \|w\cdot \nabla u(\tau)\|_{L^p} + \|\nabla (P(u)-P(v))(\tau)\|_{L^p} d\tau.
\end{align}
Combining the estimates~\eqref{trieesti} and~\eqref{lpesti}, one can get
\begin{equation} \label{loweresti}
\begin{split}
\|w\|_{F^{s-1}_{p,q}} \leq \|w_0\|_{F^{s-1}_{p,q}} + \int_{0}^t \|w\cdot\nabla u\|_{F^{s-1}_{p,q}} + \|\nabla (P(u)-P(v))\|_{F^{s-1}_{p,q}}  \\
+ \big\|2^{j(s-1)}[v,\Delta_j]\cdot\nabla w \big\| _{L_x^pl_j^q} d\tau.
\end{split}
\end{equation}
Recall that for $a\in \R$, we have
\begin{align} \label{controlnonho}
\|f\|_{F^{a}_{p,q}} \leq \|P_{\leq 0} f\|_{L^p} + \|f\|_{\dot{F}^{a}_{p,q}}.
\end{align}
Noticing that
\begin{align}
\nabla (P(u) -P(v)) = \nabla (-\Delta)^{-1} {\rm div}(w\cdot \nabla u + v\cdot \nabla w). \no 
\end{align}
By Lemma~\ref{kernelinteg}, we can assert
\begin{align}
\|P_{\leq 0} \nabla (-\Delta)^{-1}{\rm div} (w\cdot \nabla u)\|_{L^p} \leq C \|w\|_{L^{\infty}} \|u\|_{L^{p}}. \no 
\end{align}
Owing to the boundedness of operator $\partial_j \partial_k(-\Delta)^{-1}$ in $\dot{F}^{s-1}_{p,q}$(see \cite{FrToWe88,St93,Ch03}) and Propositions \ref{nonendmoser} and \ref{endmoser}, one can find
\begin{align}
\|\nabla (-\Delta)^{-1}{\rm div} (w\cdot \nabla u)\|_{\dot{F}^{s-1}_{p,q}} &\leq C \|w\cdot \nabla u\|_{\dot{F}^{s-1}_{p,q}}  \no \\
& \leq C\big( \|w\|_{L^{\infty}} \|u\|_{\dot{F}^s_{p,q}} + \|w\|_{\dot{F}^{s-1}_{p,q}} \|\nabla u\|_{L^{\infty}}\big). \no 
\end{align}
Consequently,
\begin{align}
\|\nabla (-\Delta)^{-1}{\rm div}(w\cdot \nabla u)\|_{F^{s-1}_{p,q}} \leq C\|w\|_{F^{s-1}_{p,q}} \|u\|_{F^s_{p,q}}.  \no 
\end{align}
On the other hand, ${\rm div}(v\cdot \nabla w) = {\rm div}(w\cdot \nabla v)$, we finally obtain
\begin{align}
\|\nabla (P(u)-P(v))\|_{F^{s-1}_{p,q}} \leq C\|w\|_{F^{s-1}_{p,q}} ( \|u\|_{F^s_{p,q}}+\|v\|_{F^s_{p,q}} ).  \no 
\end{align}
In the same way, one can assert
\begin{align}
\|w\cdot \nabla u\|_{F^{s-1}_{p,q}} \leq C \|w\|_{F^{s-1}_{p,q}} \|u\|_{F^s_{p,q}}.  \no 
\end{align}
Lastly, by virtue of Proposition~\ref{esticomm1} and Proposition~\ref{esticomm2}, 
\begin{align}
\|2^{j(s-1)}[v,\Delta_j]\cdot\nabla w\| _{L_x^pl_j^q} &\leq C (\|\nabla v\|_{L^{\infty}} \|w\|_{\dot{F}^{s-1}_{p,q}} + \|w\|_{L^{\infty}} \|v\|_{\dot{F}^s_{p,q}})  \no \\
& \leq C \|w\|_{F^{s-1}_{p,q}} \|v\|_{F^s_{p,q}}.  \no 
\end{align}
Summarizing the estimates above,  we deduce
\begin{align} \label{bddfs1}
\|w(t)\|_{F^{s-1}_{p,q}} \leq \|w_0\|_{F^{s-1}_{p,q}} + C \int_0^t \|w(\tau)\|_{F^{s-1}_{p,q}} (\|u(\tau)\|_{F^s_{p,q}}+ \|v(\tau)\|_{F^s_{p,q}}) d\tau.
\end{align}
Using~\eqref{bddintri} and Gronwall's inequality, one can have
\begin{align}
\|w\|_{L_T^{\infty}F^{s-1}_{p,q}} \leq C \|w_0\|_{F^{s-1}_{p,q}}.  \no 
\end{align}
which justifies~\eqref{differ}.
\end{proof}

{\it Step 3.}  Let $u_0 \in D(R)$,  we claim 
\begin{align} \label{bonasmith}
\|S_{T}(u_0) - S_{T}(P_{\leq N}u_0)\|_{L_T^{\infty} F^s_{p,q}} \leq C \|u_0-P_{\leq N} u_0\|_{F^s_{p,q}}.
\end{align}
For simplicity,  let us denote $u = S_T({u_0}),\ u^{N} = S_{T}(P_{\leq N}u_0)$ and $w^{N} = u - u^{N}$,  according to the result in Step 1 and Remark~\ref{estimatelow}, there exists some $T=T(R)$, such that 
\begin{align}
\|u\|_{L_T^{\infty}F^s_{p,q}}+ \|u^N\|_{L_T^{\infty}F^s_{p,q}} & \leq C \|u_0\|_{F^s_{p,q}} \leq C,  \\
   \|u^N \|_{L_T^{\infty}F^{s+1}_{p,q}} & \leq C \|P_{\leq N}u_0\|_{F^{s+1}_{p,q}} \leq C 2^N.   \label{bddhigher}
\end{align}
It is not hard  to see $w^N$ is a solution of  the following equation:
\begin{equation*}
\begin{cases}
\partial_t w^N + u\cdot \nabla w^N + w^N \cdot \nabla u^N + \nabla (P(u) - P(u^N)) = 0 ,  & \\
{\rm div}\, w^N = 0, & \\
w^N(0,x) = w^N_0 = u_0 - P_{\leq N} u_0.
\end{cases}
\end{equation*}
By means of  argument similar  to that in  Step 2,    one can deduce 
\begin{equation} \label{estimate4}
\begin{split}
\|w^N(t)\|_{F^s_{p,q}} \leq \|w_0^N\|_{F^s_{p,q}} + \int_0^t \|w^N\cdot \nabla u^N\|_{F^s_{p,q}} + \|\nabla (P(u)-P(u^N))\|_{F^s_{p,q}}   \\ 
+ \big\|2^{js}[u,\Delta_j]\cdot \nabla w^N \big\|_{L_x^pl_j^q} d\tau.
\end{split}
\end{equation}
Following~\eqref{controlnonho}, Lemma~\ref{kernelinteg} and Propositions~\ref{nonendmoser} and~\ref{endmoser},  we can get
\begin{align} \label{estimate1}
\|w^N\cdot \nabla u^N\|_{F^s_{p,q}}& \leq C\|w^N\|_{L^{\infty}} \|u^N\|_{L^p}+C \|w^N\|_{L^{\infty}} \|\nabla u^N\|_{\dot{F}^s_{p,q}} + C\|w^N\|_{\dot{F}^s_{p,q}} \|\nabla u^N\|_{L^{\infty}}  \no \\
& \leq C2^N\|w^N\|_{F^{s-1}_{p,q}} +  C\|w^N\|_{F^s_{p,q}} \|u^N\|_{F^s_{p,q}}  \no \\
& \leq C \|w_0^N\|_{F^s_{p,q}}+ C\|w^N\|_{F^s_{p,q}}.
\end{align}
where we used the fact $2^N \|w_0^N\|_{F^{s-1}_{p,q}} \leq C \|w_0^N\|_{F^s_{p,q}}
$ in the last inequality. Regarding the pressure, 
\begin{align}
\nabla (P(u)-P(u^N)) = \nabla (-\Delta)^{-1}{\rm div} (w^N\cdot \nabla u^N + u \cdot \nabla w^N).  \no 
\end{align}
By Lemma~\ref{equvitri} and the boundedness of Riesz operator in $\dot{F}^{s-1}_{p,q}$, one can have
\begin{align}
& \| \nabla (-\Delta)^{-1}{\rm div} (w^N\cdot \nabla u^N)\|_{F^s_{p,q}} \no \\
 &\leq C\|w^N\|_{L^p} \| u^N\|_{L^{\infty}} + C\sum_{1\leq k \leq d}  \big\|(\partial_k w^N) \cdot  \nabla (u^N)^k \big\|_{\dot{F}^{s-1}_{p,q}}  \no \\
& \leq 	C\|w^N\|_{F^s_{p,q}} \|u^N\|_{F^s_{p,q}}.  \no 
\end{align}
where Propositions~\ref{nonendmoser}-\ref{endmoser} is used in the last inequality, $(u^N)^k$ denotes $k-$th component of $u^N$.  This yields
\begin{align} \label{estimate2}
\|\nabla (P(u)-P(u^N))\|_{F^s_{p,q}} \leq C \|w^N\|_{F^s_{p,q}}( \|u^N\|_{F^s_{p,q}}+ \|u\|_{F^s_{p,q}}).
\end{align}
Moreover, on account of Propositions~\ref{esticomm1}-\ref{esticomm2}, we find
\begin{align} \label{estimate3}
\|2^{js}[u,\Delta_j]\cdot \nabla w^N\|_{L_x^pl_j^q} \leq C  \|w^N\|_{F^s_{p,q}} \|u\|_{F^s_{p,q}}.
\end{align}
Thereby the estimates \eqref{estimate1}, \eqref{estimate2}-\eqref{estimate3} in conjunction with~\eqref{estimate4} and~\eqref{bddintri} can imply 
\begin{align}  \label{bddfs}
\|w^N(t)\|_{F^s_{p,q}} \leq C\|w_0^N\|_{F^s_{p,q}} + C\int_0^t \|w^N(\tau)\|_{F^s_{p,q}} d\tau, 
\end{align}
Applying  Gronwall's  inequality, we get
\begin{align}
\|w^N\|_{L_T^{\infty}F^s_{p,q}} \leq C \|w_0^N\|_{F^s_{p,q}}.  \no
\end{align}
from which~\eqref{bonasmith} follows.

{\it Step 4.} Based on the aforementioned estimates, we show  the continuity of the solution map. Let $\psi, u_0 \in D(R)$,  then
\begin{equation*}
\begin{split}
& \|S_{T}(u_0) -S_{T}(\psi)\|_{L_T^{\infty}F^s_{p,q}} \\
& \leq \|S_{T}(u_0) -S_{T}(P_{\leq N}u_0)\|_{L_T^{\infty}F^s_{p,q}} + 
\|S_{T}(\psi) -S_{T}(P_{\leq N}\psi)\|_{L_T^{\infty}F^s_{p,q}} \\
 &\qquad \qquad \qquad \qquad \qquad  + \|S_{T}(P_{\leq N}u_0) -S_{T}(P_{\leq N}\psi)\|_{L_T^{\infty}F^s_{p,q}}  \\
& \leq C\big( \|u_0 -P_{\leq N}u_0\|_{F^s_{p,q}} + \|\psi -P_{\leq N}\psi\|_{F^s_{p,q}} \big)  \\
& \qquad \qquad + C\|S_{T}(P_{\leq N}u_0) -S_{T}(P_{\leq N}\psi)\|^{\frac{1}{2}}_{L_T^{\infty}F^{s-1}_{p,q}} \|S_{T}(P_{\leq N}u_0) -S_{T}(P_{\leq N}\psi)\|^{\frac{1}{2}}_{L_T^{\infty}F^{s+1}_{p,q}}  \\
& \leq C\big( \|u_0 -P_{\leq N}u_0\|_{F^s_{p,q}} + \|\psi -u_0\|_{F^s_{p,q}} \big)  
+ C 2^{N/2}R^{1/2}\|u_0-\psi\|^{\frac{1}{2}}_{F^{s-1}_{p,q}}.
\end{split}                  
\end{equation*}
here we employed~\eqref{differ} and~\eqref{bddhigher} in the last inequality. 
As $1 \leq p,\,q < \infty$, so for arbitrary $\epsilon>0$, one can select $N$ to be sufficiently large, such that
\[
C\|u_0 -P_{\leq N}u_0\|_{F^s_{p,q}} \leq \frac{\epsilon 
}{2}. 
\]
Then fix $N$, choose $\delta$ so small that $\|u_0-\psi\|_{F^s_{p,q}} < \delta$ and $C\delta + CR^{1/2}2^{N/2} \delta^{1/2} < \epsilon/2$. Hence,
\[
 \|S_{T}(u_0) -S_{T}(\psi)\|_{L_T^{\infty}F^s_{p,q}}  < \epsilon.
\]
this concluded the proof.


\appendix 

\section{Local Cauchy theory for the Euler equations}

In this appendix, we state and briefly show the well-known local in time existence and uniqueness for the Euler equations in Triebel-Lizorkin spaces $F^s_{p,q}(\R^d)$. For a completed  treatment, one can refer to~\cite{Ch02,Ch03}. 
Recall that $D(R):=\{\phi \in F^s_{p,q} : \|\phi\|_{F^s_{p,q}} \leq R,\ {\rm{div}}\,\phi = 0\}$, 
the primary result of this part is as follows:
\begin{thm}  \label{lwp}
Let the space dimension $d\geq 2$ and $(s,p,q)$ be such that 
\begin{align}
s>1+\frac{d}{p},\ (p,q)\in (1,\infty)\times (1,\infty) \ \ \ \textrm{or} \ \ s \geq d+1,\ p =1,\  q\in [1,\infty). \no
\end{align}
Suppose $u_0 \in D(R) $, then there exists some time $T=T(R)>0$ and a unique solution $u\in C([0,T]; F^s_{p,q})$ to the Euler equations.
\end{thm}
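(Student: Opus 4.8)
The statement is by now classical, and the plan is to follow the standard route, the only new feature being that the estimates of Section~\ref{mainestimate} let us run it at the endpoint $p=1$; for the non-endpoint range one may also consult~\cite{Ch02,Ch03,ChMiZh10}. Concretely, I would establish four things in order: (i) an \emph{a priori estimate} that closes the $F^s_{p,q}$ norm and produces a lifespan $T=T(R)$; (ii) \emph{existence}, by a regularised approximation combined with a Cauchy argument in the weaker norm $F^{s-1}_{p,q}$; (iii) \emph{uniqueness}, from a difference estimate and Gronwall; and (iv) the upgrade from weak to strong continuity in time, by the Bona--Smith approximation already carried out in Section~\ref{pfmainresult}.

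For the a priori estimate I would take $u\in C([0,T];F^s_{p,q})$ solving the Euler equations, introduce the particle trajectory map $X(\alpha,t)$ (measure preserving because $\mathrm{div}\,u=0$), apply $\Delta_j$ and integrate along the flow, exactly as for~\eqref{diffequation}, to obtain, with $P(u)=(-\Delta)^{-1}\mathrm{div}(u\cdot\nabla u)$,
\[
\Delta_j u(X(\alpha,t),t)=\Delta_j u_0(\alpha)+\int_0^t\Big([u,\Delta_j]\cdot\nabla u-\nabla\Delta_j P(u)\Big)(X(\alpha,\tau),\tau)\,d\tau,
\]
together with the analogous identity without $\Delta_j$. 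Multiplying by $2^{js}$, taking $L^p_\alpha l^q_j=L^p_x l^q_j$ norms, and adding the $L^p$ bound (as in~\eqref{trieesti}--\eqref{loweresti}) leads to $\|u(t)\|_{F^s_{p,q}}\leq\|u_0\|_{F^s_{p,q}}+C\int_0^t\big(\|u\cdot\nabla u\|_{F^s_{p,q}}+\|\nabla P(u)\|_{F^s_{p,q}}+\|2^{js}[u,\Delta_j]\cdot\nabla u\|_{L^p_x l^q_j}\big)\,d\tau$. The three terms on the right I would bound precisely as in Step~2 of Section~\ref{pfmainresult}: the transport term and the commutator by the Moser inequality (Propositions~\ref{nonendmoser}--\ref{endmoser}) and the commutator estimate (Propositions~\ref{esticomm1}--\ref{esticomm2}), both reducing to $\|u\|_{F^s_{p,q}}\|\nabla u\|_{L^\infty}\lesssim\|u\|_{F^s_{p,q}}^2$ via the embedding $F^s_{p,q}\hookrightarrow W^{1,\infty}$ (recall $s>1+d/p$, or $F^{d+1}_{1,\infty}\hookrightarrow B^1_{\infty,1}$); and the pressure term by writing $\nabla P(u)=\nabla(-\Delta)^{-1}\mathrm{div}(u\cdot\nabla u)$, using the $\dot{F}^s_{p,q}$-boundedness of $\partial_k\partial_l(-\Delta)^{-1}$ and Moser for the homogeneous part, and Lemma~\ref{kernelinteg} together with $F^s_{p,q}\hookrightarrow L^\infty$ for the low-frequency $L^p$ part. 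This gives $\|u(t)\|_{F^s_{p,q}}\leq\|u_0\|_{F^s_{p,q}}+C\int_0^t\|u(\tau)\|_{F^s_{p,q}}^2\,d\tau$, hence a lifespan $T=T(R)>0$ with $\|u\|_{L^\infty_T F^s_{p,q}}\leq 2R$; the same computation with $s$ replaced by $s+\gamma$, the coefficient $\|\nabla u\|_{L^\infty}\lesssim\|u\|_{F^s_{p,q}}$ staying at the lower level, plus Gronwall, gives persistence of higher regularity.

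For existence I would regularise the nonlinearity, e.g.\ solve $\partial_t u_\e+\mathbb{P}\,J_\e\big((J_\e u_\e)\cdot\nabla(J_\e u_\e)\big)=0$, $u_\e(0)=J_\e u_0$, with $J_\e$ a Friedrichs mollifier and $\mathbb{P}$ the Leray projection; this is a locally well-posed ODE in $F^s_{p,q}$ preserving $\mathrm{div}\,u_\e=0$, the a priori estimate (which is insensitive to these modifications) gives uniform bounds on a common interval $[0,T]$, $T=T(R)$, and a difference estimate of the same shape as Step~2 of Section~\ref{pfmainresult} shows $\{u_\e\}$ is Cauchy in $C([0,T];F^{s-1}_{p,q})$; the limit $u$ then lies in $L^\infty_T F^s_{p,q}\cap C_w([0,T];F^s_{p,q})$ and solves the equation (a Picard iteration on the linearised transport--pressure system would serve equally well). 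Uniqueness follows because the difference of two solutions with the same data satisfies precisely~\eqref{differ}, so Gronwall forces it to vanish; and weak continuity is promoted to $u\in C([0,T];F^s_{p,q})$ exactly as in Step~3 of Section~\ref{pfmainresult}, since the smoother solutions $u^N=S_T(P_{\leq N}u_0)$ lie in $C([0,T];F^s_{p,q})$ and~\eqref{bonasmith} shows $u^N\to u$ in that space.

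The step I expect to be the real obstacle is the closure of the a priori (and difference) estimates at $p=1$: the Moser bound for $u\cdot\nabla u$, the commutator bound for $[u,\Delta_j]\cdot\nabla u$ in $L^1_x l^q_j$, and the low-frequency $L^1$ control of the pressure — which are exactly Propositions~\ref{endmoser}, \ref{esticomm2} and Lemma~\ref{kernelinteg}. Granting those, everything else is a routine adaptation of the $(p,q)\in(1,\infty)^2$ argument.
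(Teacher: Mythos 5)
Your proposal is correct and rests on the same pillars as the paper's appendix: the trajectory-based energy identity, the endpoint Moser and commutator estimates (Propositions~\ref{endmoser} and~\ref{esticomm2}) together with Lemma~\ref{kernelinteg} for the low-frequency pressure, a Cauchy argument in the weaker norm $F^{s-1}_{p,q}$, and a Bona--Smith/interpolation step to recover convergence in $F^{s}_{p,q}$. The genuine difference is the approximation scheme and the order of operations. You regularise the nonlinearity with a Friedrichs mollifier, obtain a limit in $L^\infty_T F^s_{p,q}\cap C_w([0,T];F^s_{p,q})$, and only afterwards invoke the Bona--Smith comparison with $S_T(P_{\leq N}u_0)$ to upgrade to strong continuity in time. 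The paper instead runs a linear transport iteration $u^{(m)}$ with frequency-truncated data $P_{\leq m}u_0$, so each iterate is automatically Lipschitz in time with values in $F^s_{p,q}$ (at the cost of a factor $2^m$ in the $F^{s+1}_{p,q}$ bound), and builds the Bona--Smith structure into the iteration itself: it compares $u^{(m)}$ with the doubly truncated $u^{(m,N)}$, shows the latter are Cauchy in $F^{s-1}_{p,q}$ uniformly in $N$, and interpolates $\|\cdot\|_{F^s_{p,q}}\lesssim\|\cdot\|_{F^{s-1}_{p,q}}^{1/2}\|\cdot\|_{F^{s+1}_{p,q}}^{1/2}$ to conclude that $u^{(m)}$ is Cauchy directly in $C([0,T];F^s_{p,q})$, so the solution map never has to be defined before the construction is complete. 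Your route requires you to first establish existence with persistence of regularity for the smoother data $P_{\leq N}u_0$, so that $S_T(P_{\leq N}u_0)\in C([0,T];F^s_{p,q})$ is available before~\eqref{bonasmith} is applied; this is not circular, since for data in $F^{s+1}_{p,q}$ time-continuity in $F^s_{p,q}$ follows directly from the equation, but it deserves to be said explicitly. What each approach buys: the paper's arrangement yields strong convergence of the approximating sequence in one pass, while yours keeps the existence proof, the uniqueness proof, and the continuity upgrade cleanly modular. Both are valid, and both reduce to the same endpoint estimates at $p=1$, which you correctly identify as the real content.
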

\begin{proof}
As stated in Section~\ref{intro}, we will briefly outline the proof, as to the estimates involved, we omit the  reasoning arguments and just present the result, which can essentially be established by applying Propositions~\eqref{nonendmoser}-\eqref{endmoser} and Propositions~\eqref{esticomm1}-\eqref{esticomm2}, see also Section~\ref{pfmainresult}. Let $(u^{(m)}, p^{(m)})_{m\geq 0}$ be a sequence satisfying 
\begin{equation} \label{iterationequation}
\begin{cases}
\partial_t u^{(m)} + u^{(m-1)}\cdot \nabla u^{(m)} + \nabla p^{(m-1)} = 0, \\
{\rm div}\,u^{(m)} = 0, \\
u^{(m)}(x,0) = P_{\leq m} u_0.
\end{cases}
\end{equation} 
with $u^{(0)}=p^{(0)} = 0 $, $\|u_0\|_{F^s_{p,q}} \leq R$.  The proof can be divided into five steps:

{\it Step 1.} First we claim that $u^{(m)}$ is uniformly bounded for some small time.  Following argument that leads to~\eqref{bddfs}, one can assert
\begin{align}
\|u^{(m)}(t)\|_{F^s_{p,q}} \leq \|P_{\leq m}u_0\|_{F^s_{p,q}} + C \int_0^t \|u^{(m-1)}(\tau)\|_{F^s_{p,q}} \|u^{(m)}(\tau)\|_{F^s_{p,q}} d\tau. \no
\end{align}
Thus by Remark~\ref{estimatelow},
\begin{align}
\|u^{(m)}\|_{L^{\infty}_TF^s_{p,q}} \leq C \|u_0\|_{F^s_{p,q}} + CT\|u^{(m-1)}\|_{L^{\infty}_TF^s_{p,q}} \|u^{(m)}\|_{L^{\infty}_TF^s_{p,q}}.  \no 
\end{align}
Now we specify $T \leq \tilde{T}_0$ by taking $8C^2\tilde{T}_0\|u_0\|_{F^s_{p,q}} \leq 1$, then it follows by standard induction argument that
\begin{align} \label{sequbdd}
\|u^{(m)}\|_{L^{\infty}_{\tilde{T}_0}F^s_{p,q}} \leq 2C \|u_0\|_{F^s_{p,q}}, \ \ \forall\, m \geq 0.
\end{align}
Moreover
\begin{align} 
\|u^{(m)}\|_{L_{T}^{\infty} F^{s+1}_{p,q}} &\leq C \|P_{\leq m}u_0\|_{F^{s+1}_{p,q}} + CT\|u^{(m-1)}\|_{L^{\infty}_TF^s_{p,q}} \|u^{(m)}\|_{L^{\infty}_TF^{s+1}_{p,q}}  \no \\
& \qquad \qquad \qquad + CT \|u^{(m-1)}\|_{L^{\infty}_TF^{s+1}_{p,q}} \|u^{(m)}\|_{L^{\infty}_TF^{s}_{p,q}} \no \\
&\leq   C 2^{m}\|u_0\|_{F^s_{p,q}} + CTR \big( \|u^{(m-1)}\|_{L^{\infty}_TF^{s+1}_{p,q}}+ \|u^{(m)}\|_{L^{\infty}_TF^{s+1}_{p,q}} \big). \no 
\end{align}
Iterating again, one can find some ${T}_0 ={T_0}(R) \leq \tilde{T}_0$, such that 
\begin{align} \label{higherorder}
\|u^{(m)}\|_{L_{{T_0}}^{\infty} F^{s+1}_{p,q}} \leq 4C2^m \|u_0\|_{F^s_{p,q}}. 
\end{align}
Since $u^{(m)}$ also solves the following integral equation(Duhamel formula)
\begin{align}
u^{(m)}(x,t) = P_{\leq m} u_0 + \int_{0}^t \mathbb{P} \big(u^{(m-1)} \cdot \nabla u^{(m)}  \big)(\tau) d\tau. \no 
\end{align}
where $\mathbb{P}:= {\rm Id }- \nabla \Delta^{-1} {\rm div} $ is the Leray projector operator onto divergence free vector field.
We readily see 
\begin{align*}
& \|u^{(m)} (t_1)-u^{(m)} (t_2) \|_{F^s_{p,q}}  \no \\
& \leq \bigg| \int_{t_1}^{t_2} \big \| \mathbb{P} \big(u^{(m-1)} \cdot \nabla u^{(m)}  \big)   \big \|_{F^s_{p,q}}  d\tau  \bigg |  \no \\
& \leq  C\bigg| \int_{t_1}^{t_2}  \|  u^{(m-1)}  \|_{F^s_{p,q}}  \| u^{(m)}  \|_{F^s_{p,q}}  +  \|  u^{(m-1)}  \|_{F^{s-1}_{p,q}}  \| u^{(m)}  \|_{F^{s+1}_{p,q}}  d\tau  \bigg |  \no \\
& \leq C2^m R^2 \ |t_1 -t_2|.
\end{align*}
where we used~\eqref{sequbdd} and \eqref{higherorder}  in the last step. This infers that for each fixed $m \geq 0$, $u^{(m)} \in C([0,{T}_0]; F^s_{p,q})$. 

{\it Step 2.}  Let $v_0 \in D(cR)$ for some universal constant $c$,  $\{v^{(m)}, q^{(m)} \}_{m \geq 0}$ solves~\eqref{iterationequation} with initial data $P_{\leq m}v_0$, we claim that there exist some $T_1= T_1(R)$ and a constant $C$ independent of $m$,  such that 
\begin{align} \label{differestimate}
\|u^{(m)} - v^{(m)}\|_{L_{T_1}^{\infty} F^{s-1}_{p,q}} \leq C \| u_0 -  v_0\|_{F^{s-1}_{p,q}}, \ \ \  \forall\ m \geq 0. 
\end{align}
Indeed,  according to results in Step 1,  one can say there exists some $\tilde{T}_1 = \tilde{T}_1 (R)$,  s.t.
\begin{align} \label{estimateapp}
\|u^{(m)}\|_{L_{\tilde{T}_1}^{\infty}F^s_{p,q} } \leq C \|u_0\|_{F^s_{p,q}}, \ \ \  \|v^{(m)}\|_{L_{\tilde{T}_1}^{\infty}F^s_{p,q} } \leq C \|v_0\|_{F^s_{p,q}}, \ \ \forall\ m\geq 0.
\end{align}
Now set $w^{(m)} = u^{(m)} -v^{(m)}$,  $w_0 = u_0 -v_0$,  then 
\begin{equation*}
\begin{cases}
\partial_t w^{(m)} + u^{(m-1)}\cdot \nabla w^{(m)} + w^{(m-1)}\cdot \nabla v^{(m)} + \nabla (p^{(m-1)} -q^{(m-1)})= 0, \\
{\rm div}\,w^{(m)} = 0, \\
w^{(m)}(x,0) = w_0^{(m)} = P_{\leq m} w_0.
\end{cases}
\end{equation*}
Hence,  
\begin{align*}
\|w^{(m)}(t)\|_{F^{s-1}_{p,q}}& \leq \|w_0^{(m)}\|_{F^{s-1}_{p,q}} + \int_{0}^t \|w^{(m-1)}\cdot \nabla v^{(m)}\|_{F^{s-1}_{p,q}} + \|\nabla (p^{(m-1)} -q^{(m-1)}) \|_{F^{s-1}_{p,q}}   \\
& \qquad \qquad \qquad+  \big\|2^{j(s-1)} [u^{(m-1)}, \Delta_j ]\cdot \nabla w^{(m)} \big\|_{L_x^p l_j^q}  d\tau  \\
& \leq \|w_0^{(m)}\|_{F^{s-1}_{p,q}} +C \int_0^t \|w^{(m-1)}\|_{F^{s-1}_{p,q}} \|v^{(m)}\|_{F^{s}_{p,q}}   \\
& \qquad \qquad \qquad \qquad \qquad + \|w^{(m)}\|_{F^{s-1}_{p,q}} \|u^{(m-1)}\|_{F^{s}_{p,q}}  d\tau.
\end{align*}
Applying~\eqref{estimateapp} with some $T_1 \leq \tilde{T}_1$,  we have 
\begin{align}
\|w^{(m)}(t)\|_{L_{T_1}^{\infty}F^{s-1}_{p,q}}  \leq C\|w_0\|_{F^{s-1}_{p,q}} +CT_1R \big(  \|w^{(m-1)}\|_{L_{T_1}^{\infty}F^{s-1}_{p,q}}  + \|w^{(m)}\|_{L_{T_1}^{\infty}F^{s-1}_{p,q}} \big)   \no 
\end{align}
Selecting $T_1$ so small that $CT_1R \leq 1/8$, by an induction argument, one can  immediately see
\begin{align}
\|w^{(m)}\|_{L_{T_1}^{\infty}F^{s-1}_{p,q}}  \leq 2C \|w_0\|_{F^{s-1}_{p,q}} , \ \ \ \ \forall\ m \geq 0.  \no 
\end{align}
This yields~\eqref{differestimate}.

{\it Step 3.}  Let $u^{(m,N)}$  solves equation~\eqref{iterationequation} with initial data $P_{\leq m} P_{\leq N} u_0$, i.e. 
\begin{equation*}
\begin{cases}
\partial_t u^{(m,N)} + u^{(m-1,N)}\cdot \nabla u^{(m,N)} + \nabla p^{(m-1,N)} = 0, \\
{\rm div}\,u^{(m,N)} = 0, \\
u^{(m,N)}(x,0) = P_{\leq m} P_{\leq N}u_0.
\end{cases}
\end{equation*}
where $u^{(0,N)} = p^{(0,N)} = 0$.  Then  there exist some $T_2 = T_2(R)$ and a constant $C$ independent of $m, N$,  satisfying 
\begin{align}
\| u^{(m)} - u^{(m,N)}\|_{L_{T_2}^{\infty}F^{s}_{p,q}}  \leq C \|u_0 - P_{\leq N} u_0\|_{F^{s}_{p,q}}. \no 
\end{align}
We can argue as follows:  by the estimates in Step 1, $ \exists\, \tilde{T}_2 = \tilde{T}_2(R) \leq T_0$, 
\begin{align}
\|u^{(m,N)}\|_{L^{\infty}_{\tilde{T}_2}  F^{s+k}_{p,q}} \leq  C2^{Nk} \|u_0\|_{F^s_{p,q}}, \ \ \ \forall\, m \geq 0,\, k =0,\,1.  \no 
\end{align}
Now let $w^{(m,N)} = u^{(m)}- u^{(m,N)}$,  $w_0^{(N)}:=u_0 - P_{\leq N} u_0 $, then 
\begin{equation*}
\begin{cases}
\partial_t w^{(m,N)} + u^{(m-1)}\cdot \nabla w^{(m,N)} + w^{(m-1,N)}\cdot \nabla u^{(m,N)} + \nabla (p^{(m-1)} -p^{(m-1,N)})= 0, \\
{\rm div}\,w^{(m,N)} = 0, \\
w^{(m,N)}(x,0) = w_0^{(m,N)} = P_{\leq m} w_0^{(N)}.
\end{cases}
\end{equation*}
Similarly, for $0< t \leq T_2 \leq \tilde{T}_2$,
\begin{align*}
& \quad  \|w^{(m,N)}(t)\|_{F^{s}_{p,q}}  \\
& \leq \|w_0^{(m,N)}\|_{F^{s}_{p,q}} + \int_{0}^t C 2^N R \|w^{(m-1,N)}\|_{F^{s-1}_{p,q}}    +  C R  \|w^{(m,N)}\|_{F^{s}_{p,q}}   +  C R \|w^{(m-1,N)}\|_{F^{s}_{p,q}} d\tau.   
\end{align*}
Noticing formula~\eqref{differestimate},  we obtain
\begin{align*} 
\|w^{(m,N)}\|_{L_{T_2}^{\infty}F^{s}_{p,q}}   \leq C\|w_0^{(N)}\|_{F^{s}_{p,q}} +  C R T_2\big(   \|w^{(m-1,N)}\|_{L_{T_2}^{\infty}F^{s}_{p,q}}  + \|w^{(m,N)}\|_{L_{T_2}^{\infty}F^{s}_{p,q}}  \big ).
\end{align*}
Now choosing $CT_2 R $ small enough and performing an induction on $m$, we have
\begin{align}
\|w^{(m,N)}\|_{L_{T_2}^{\infty}F^{s}_{p,q}} \leq 2C\|w_0^{(N)}\|_{F^{s}_{p,q}}, \ \ \ \forall\, m,\, N \geq 0. \no 
\end{align} 
The desired result then follows.
 
{\it Step 4.} Next we show $\{u^{(m)}\}$ is a Cauchy sequence in $C([0,T_*]; F^{s-1}_{p,q})$ for some $T_* =T_*(R) \leq {\rm min}\{ T_0, T_2\} = \bar{T}$. In fact, set $w^{(m)} = u^{(m)}- u^{(m-1)}$, one can easily see $w^{(m+1)}$ satisfies 
\begin{equation*}
\begin{cases}
\partial_t w^{(m+1)} + u^{(m)}\cdot \nabla w^{(m+1)} + w^{(m)}\cdot \nabla u^{(m)} + \nabla (p^{(m)} - p^{(m-1)}) = 0,  \\
{\rm div}\, w^{(m+1)} = 0,  \\
w^{m+1}(0,x)= \Delta_{m+1} u_0.
\end{cases}
\end{equation*}
Likewise,  
\begin{align*}
\|w^{(m+1)}(t)\|_{F^{s-1}_{p,q}} \leq \|\Delta_{m+1} u_0\|_{F^{s-1}_{p,q}} + C\int_0^t \|u^{(m)}\|_{F^{s}_{p,q}} \big(\|w^{(m)}\|_{F^{s-1}_{p,q}}+\|w^{(m+1)}\|_{F^{s-1}_{p,q}} \big)d\tau.
\end{align*}
By~\eqref{sequbdd},  we get
\begin{equation*}
\begin{split}
\|w^{(m+1)}\|_{L_{T_*}^{\infty}F^{s-1}_{p,q}} \leq C 2^{-(m+1)} \|u_0\|_{F^s_{p,q}} & + CT_*R
\| w^{(m+1)}\|_{L_{T_*}^{\infty}F^{s-1}_{p,q}}  \\
& +CT_* R \|w^{(m)}\|_{L_{T_*}^{\infty}F^{s-1}_{p,q}}.
\end{split}
\end{equation*}
Now choosing $T_* \leq \bar{T}$, such that $16C^2T_* R \leq 1/2$, by a simple iteration, one can show
$$
\|w^{(m)}\|_{L_{T_*}^{\infty}F^{s-1}_{p,q}} \leq (4C)2^{-m}\|u_0\|_{F^s_{p,q}}, \ \ \forall\ m\geq 0.
$$
This exponential decay implies what we want. 

{\it Step 5.} Finally we prove $\{u^{(m)}\}_{m\geq 0}$ is a Cauchy sequence in $X_{T_*}^s = C([0,T_*]; F^s_{p,q})$. 
According to the conclusion in Step 4, one can also  claim that $\{u^{(m,N)}\}_{m \geq 0}$ is a Cauchy sequence  with 
\begin{align}
\|u^{(m,N)}-u^{(n,N)}\|_{L_{T_*}^{\infty}F^{s-1}_{p,q}} \leq C 2^{-m} R, \ \ \ \ n\geq m, \ \forall\, N \geq 0, \no 
\end{align} 
here $C$ doesn't depend on $m, n, N$. As a consequence, for $n \geq m$, 
\begin{align}
&\|u^{(m)} - u^{(n)}\|_{L^{\infty}_{T_*} F^s_{p,q}}  \no \\
&\leq \|u^{(m)} - u^{(m,N)}\|_{L^{\infty}_{T_*} F^s_{p,q}} + \|u^{(m,N)} - u^{(n,N)}\|_{L^{\infty}_{T_*} F^s_{p,q}} + \|u^{(n,N)} - u^{(n)}\|_{L^{\infty}_{T_*} F^s_{p,q}}  \no \\
& \leq C \|u_0 - P_{\leq N} u_0\|_{F^s_{p,q}} + C\|u^{(m,N)} - u^{(n,N)}\|^{\frac{1}{2}}_{L^{\infty}_{T_*} F^{s-1}_{p,q}} \|u^{(m,N)} - u^{(n,N)}\|^{\frac{1}{2}}_{L^{\infty}_{T_*} F^{s+1}_{p,q}}  \no  \\
& \leq C \|u_0 - P_{\leq N} u_0\|_{F^s_{p,q}} + C 2^{-m/2} 2^{N/2} R.   \no 
\end{align}
Since $1 \leq p, q < \infty$, the Schwartz function is dense in $F^s_{p,q}$, see~\cite{Tr83}, one can assert that the first term $C \|u_0 - P_{\leq N} u_0\|_{F^s_{p,q}}$ can be made arbitrarily small provided that $N$ is large enough. Then fix such $N$, taking $m$ to be sufficiently large, the second term  $C 2^{-m/2} 2^{N/2} R $ can also be as small as we want, so 
$\{u^{(m)}\}_{m \geq 0}$ is a Cauchy sequence in $X_{T_*}^s$ and converges to some $u\in X_{T_*}^s$. 
In view of~\eqref{iterationequation}, we find that the limit $u$ is a solution of the Euler system with initial data $u_0 \in F^s_{p,q}$ and meets~\eqref{sequbdd} as well. This finishes the local existence of solution in $X_{T_*}^s$, as to the uniqueness, which essentially can be done in the same way as the estimate in Step 4, we refer reader to~\cite{Ch02} for more details.
\end{proof}

\subsection*{Acknowledgements}
Z.~Guo  is partially supported by ARC DP170101060.  


\end{document}